\numberwithin{equation}{section}
\newtheorem{prop}{Proposition}[section]
\newtheorem{theorem}[prop]{Theorem}
\newtheorem{lemma}[prop]{Lemma}
\newtheorem{corollary}[prop]{Corollary}
\newtheorem{example}[prop]{Example}
\begin{document}
\title{Lower bound for the first eigenvalue of $p-$Laplacian and applications in asymptotically hyperbolic Einstein manifolds}
\author{Xiaoshang Jin\thanks{The author was supported by {NNSF of China \# 12201225}}}

\date{}
\maketitle
\begin{abstract}
This paper investigates the first Dirichlet eigenvalue for the $p$-Laplacian in Riemannian manifolds. Firstly, we establish a lower bound for this eigenvalue under the condition that the domain includes a specific function which fulfills certain criteria related to divergence and gradient conditions. In the subsequent section, we introduce an enhanced lower bound for the eigenvalue, which is linked to the distance function defined in the domain. As a practical application, we provide an estimation for the first Dirichlet eigenvalue of geodesic balls with large radius in asymptotically hyperbolic Einstein manifolds.
\end{abstract}
\section{Introduction}
Suppose that $(M,g)$ is a complete Riemannian manifold. For any $p>1,$ we define the $p-$Laplacian
as
\begin{equation}\label{1.1}
\Delta_p : W^{1,p}_{\text{loc}}(M)\rightarrow W^{-1,q}(M),\ \ \ \ \ \Delta_p u=\text{div}(|\nabla u|^{p-2}\nabla u)
\end{equation}
Here $W^{-1,q}(M)$ represents the dual space of $W^{1,p}_0(M)$ and
the Sobolev space $W^{1,p}_0(M)$  is the closure $C^\infty_0(M)$ with respect to the norm
$$\|u\|_{1,p}=\left[\int_M(|u|^p+|\nabla u|^p)dv_g\right]^{\frac{1}{p}}.$$
    If $p=2,$ then
$\Delta_2=\Delta$
is the Laplace-Beltrami operator. We say that
$$\Delta_p u=0 \ (\geq 0,\leq 0)$$
if for all nonnegative function $\varphi\in C^\infty_0(M),$
$$-\int_M|\nabla u|^{p-2}\cdot g(\nabla\varphi,\nabla u) d\upsilon_g=0 \ (\geq 0,\leq 0).$$
\par Let $\Omega\subseteq M$ be a bounded domain with piecewise smooth boundary. The Dirichlet eigenfunctions are defined by solving the following problem for $u\neq 0$ and eigenvalue $\lambda$ as follows:
 \begin{equation}\label{1.2}
 \left\{
    \begin{array}{l}
    \Delta_p u=-\lambda |u|^{p-2} u \ \ \ \ \text{in} \ \Omega,
    \\ u=0\ \ \ \ \ \ \ \ \ \ \ \ \ \ \ \ \ \ \ \ \ \ \ \ \ \text{on}\ \partial \Omega.
    \end{array}
 \right.
 \end{equation}
 The first eigenvalue
$\lambda_{1,p}(\Omega)$ of the $p-$Laplacian is defined as the least number $\lambda$ for which there exists a nonzero function $u\in W^{1,p}_0(\Omega)$ that solves the equation (\ref{1.2}). It is well-known that $\lambda_{1,p}$ is associated
to an eigenfunction which is positive in $C^{1,\alpha}(\overline{\Omega})$ and is unique up to a
multiplicative constant (see  \cite{barles1988remarks} or \cite{belloni2002direct} for a simple proof in Euclidean space).
 It can be also characterized by  the relation \cite{henrot2006extremum}:
\begin{equation}\label{1.3}
\lambda_{1,p}(\Omega)=\inf\left\{\frac{\int_\Omega|\nabla u|^p d\upsilon_g}{\int_\Omega |u|^pd\upsilon_g}:u\in W^{1,p}_0(\Omega)\setminus\{0\}\right\}
\end{equation}
For a complete noncompact manifold $(M,g)$ the $p-$eigenvalue of $M$ can be defined as the limit:
$$\lambda_{1,p}(M)=\lim\limits_{k\rightarrow\infty}\lambda_{1,p}(\Omega_k)$$
for any smoothly compact exhaustion $\{\Omega_k\}_{k=1}^\infty$ of $M.$ The definition is well-defined because the first Dirichlet eigenvalue of $p-$Laplacian also has the property of domain monotonicity. One can see lemma 1.1 in \cite{du2017estimates} for more details.
\\
\par Our first result of this paper provides a lower bound of the first eigenvalue of
the $p-$Laplacian when the domain $\Omega$ admits a special function.  More specifically, we state the following theorem:
\begin{theorem}\label{thm1.1}
  Let $\Omega$ be a bounded smooth domain in a Riemannian manifold. Assume $p_2>p_1-1>0$ and $p=\frac{p_2}{p_2-p_1+1}.$ If $f\in W^{1,p_1}_{\rm loc}(\Omega)$ satisfies
  \begin{equation}\label{1.4}
    \Delta_{p_1} f-C|\nabla f|^{p_2}\geq D
  \end{equation}
  in $\Omega$ for some positive numbers $C$ and $D.$ Then
  \begin{equation}\label{1.5}
  \lambda_{1,p}(\Omega)\geq \Big(\frac{C}{p-1}\Big)^{p-1}\cdot D
  \end{equation}
 \end{theorem}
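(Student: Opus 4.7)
The plan is to use the variational characterization (\ref{1.3}) of $\lambda_{1,p}(\Omega)$. Given an arbitrary test function $u \in W^{1,p}_0(\Omega)\setminus\{0\}$, I would test the differential inequality (\ref{1.4}) against the nonnegative function $\varphi = |u|^p$. By the weak formulation of the $p_1$-Laplacian, after a density argument justifying the use of $|u|^p$ as a test function, this yields
\begin{equation*}
-p\int_\Omega |\nabla f|^{p_1-2}\,g(\nabla f, |u|^{p-2}u\,\nabla u)\,dv_g \;-\; C\int_\Omega |\nabla f|^{p_2}|u|^p\,dv_g \;\geq\; D\int_\Omega |u|^p\,dv_g,
\end{equation*}
using $\nabla(|u|^p)=p|u|^{p-2}u\,\nabla u$.

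Next I would estimate the first term by Cauchy--Schwarz to obtain
\begin{equation*}
D\int_\Omega|u|^p\,dv_g \;\leq\; p\int_\Omega |\nabla f|^{p_1-1}|u|^{p-1}|\nabla u|\,dv_g \;-\; C\int_\Omega |\nabla f|^{p_2}|u|^p\,dv_g.
\end{equation*}
The goal is now to absorb the cross term on the right into $\int|\nabla u|^p$ and the second term via Young's inequality. Writing $|\nabla f|^{p_1-1}|u|^{p-1}|\nabla u| = |\nabla u|\cdot\bigl(|u|^{p-1}|\nabla f|^{p_1-1}\bigr)$ and applying Young's inequality with conjugate exponents $p$ and $q=p/(p-1)$ together with a parameter $\varepsilon>0$, the second factor contributes $|u|^{(p-1)q}|\nabla f|^{(p_1-1)q}$. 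The exponents then match exactly when $(p-1)q = p$ and $(p_1-1)q = p_2$, which is algebraically equivalent to the hypothesis $p=p_2/(p_2-p_1+1)$. This is the step where the specific relation between $p,p_1,p_2$ enters the argument.

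With these exponents the estimate becomes
\begin{equation*}
D\int_\Omega |u|^p\,dv_g \;\leq\; \varepsilon^p\int_\Omega |\nabla u|^p\,dv_g \;+\; \Bigl(\tfrac{p}{q\varepsilon^q}-C\Bigr)\int_\Omega |\nabla f|^{p_2}|u|^p\,dv_g.
\end{equation*}
I would choose $\varepsilon$ so that the bracketed coefficient vanishes, i.e.\ $\varepsilon^q=(p-1)/C$ (using $p/q=p-1$). Since $p=q(p-1)$, this gives $\varepsilon^p=((p-1)/C)^{p-1}$, leaving
\begin{equation*}
\frac{\int_\Omega |\nabla u|^p\,dv_g}{\int_\Omega|u|^p\,dv_g} \;\geq\; \Bigl(\frac{C}{p-1}\Bigr)^{p-1} D.
\end{equation*}
Taking the infimum over $u\in W^{1,p}_0(\Omega)\setminus\{0\}$ yields (\ref{1.5}).

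The main obstacle is not algebraic but the regularity issue: since $f$ is only in $W^{1,p_1}_{\rm loc}$, justifying $|u|^p$ as an admissible test function in the weak form requires either approximating $u$ by $C^\infty_0$ functions and controlling the limit via the local integrability of $|\nabla f|^{p_1-1}$ and $|\nabla f|^{p_2}$, or restricting to a compactly supported subdomain and passing to the limit. Once this technicality is handled, the proof reduces to the clean calculation above, and the exponent relation $p=p_2/(p_2-p_1+1)$ is precisely what makes Young's inequality separate the two terms correctly.
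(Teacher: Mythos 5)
Your proposal is correct and follows essentially the same route as the paper: test the inequality $\Delta_{p_1}f - C|\nabla f|^{p_2}\geq D$ against $|u|^p$, apply Cauchy--Schwarz and then Young's inequality with a parameter (your $\varepsilon$ is the reciprocal of the paper's $\theta$), using $(p-1)q=p$ and $(p_1-1)q=p_2$ to cancel the $C\int|\nabla f|^{p_2}|u|^p$ term. The paper likewise works with $C^\infty_0$ test functions and concludes by density, so the regularity caveat you flag is handled the same way.
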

The estimate (\ref{1.5}) is scaling invariant in the following sense. For any $k>0,$ setting $\bar{f}=kf$ and $\bar{C}=Ck^{p_1-p_2-1}$ leads to
 $$\Delta_{p_1} \bar{f}-\bar{C}|\nabla \bar{f}|^{p_2}=k^{p_1-1}\Delta_{p_1} f-Ck^{p_1-p_2-1}k^{p_2} |\nabla f|^{p_2}\geq k^{p_1-1}D:=\bar{D},$$
 then $\lambda_{1,p}(\Omega)\geq\Big(\frac{\bar{C}}{p-1}\Big)^{p-1}\cdot\bar{D}= \Big(\frac{C}{p-1}\Big)^{p-1}\cdot D.$
\\
\par It should be noted that although the proof of Theorem \ref{thm1.1} is simple by Young's inequality with a traditional method, we believe that the theorem is still widely useful. In fact, appropriate constants $p_1$, $p_2$, $C$, $D$ can be chosen to develop other methods for estimating the lower bound of $\lambda_{1,p}(\Omega)$. Including:
\begin{itemize}
  \item [(1)] a new proof of Barta's inequality (see Proposition \ref{prop2.1});
  \item [(2)] an improved estimate of Theorem 1.1 in \cite{carvalho2022fundamental} (see Theorem \ref{thm2.2});
  \item [(3)] a new proof of ``$p-$eigenvalue for AHE manifolds" (see Proposition \ref{prop2.5}).
\end{itemize}
 We will provide more details in Section \ref{sec2.1}.
\\~
\par In the second part of the paper,we explore the asymptotical behavior of the first eigenvalue
$\lambda_{1,p}(\Omega)$ of the $p-$Laplacian as $\Omega$ expands to encompass the noncompact manifold. Initial findings by Savo in \cite{savo2009lowest} showed that
$$
  \lambda_{1,2}(B(o,R))=\frac{n^2}{4}+\frac{\pi^2}{R^2}+O(R^{-3}),\ \ R\rightarrow+\infty.
$$
for any geodesic ball $B(o,R)$ in an $n+1-$dimensional hyperbolic space. Later the result was extended in \cite{kristaly2022new} where the first four terms in the expansion of $\lambda_{1,2}(B(o,R))$ was obtained.  A recent study in \cite{jin2024asymptotic} has confirmed that these results are applicable even to the $n+1-$dimensionall asymptotically hyperbolic Einstein manifold with nonnegative Yamabe conformal infinity.
\par In the followings, we try to derive a similar estimate for the $p-$Laplacian, i.e. how $\lambda_{1,p}(B(o,R))$ tends to
$\big(\frac{n}{p}\big)^p$ as $R\rightarrow\infty$ in AHE manifolds with nonnegative Yamabe conformal infinity. We first present the following theorem, which can be considered as an enhancement of Theorem \ref{thm2.2}:
\begin{theorem}\label{thm1.2}
Let $\Omega$ be a bounded smooth domain in a complete Riemnnian manifold, if there exists a function $r:\Omega\rightarrow[0,R]$ satisfying that $|dr|=1$ and $\Delta r\geq k$ almost everywhere for some positive constants $R$ and $k,$ then for any $p>1,$
   \begin{equation}\label{1.6}
     \lambda_{1,p}(\Omega)\geq
       \Big(\frac{k}{p}\Big)^p\Big(1+\frac{\pi^2}{(1+\frac{k}{p}R)^2}\Big)^{\min\{p-1,1\}}.
   \end{equation}
\end{theorem}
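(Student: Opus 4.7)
The strategy is to apply Theorem~\ref{thm1.1} to a radial test function $f=\psi(r)$ built from the given function $r$, where $\psi:[0,R]\to\mathbb R$ is smooth with $\psi'>0$ to be chosen. Since $|dr|=1$ and $\Delta r\ge k$, a direct computation gives
\[
\Delta_{p_1}f = (\psi')^{p_1-1}\Delta r + (p_1-1)(\psi')^{p_1-2}\psi''
\ge k(\psi')^{p_1-1}+(p_1-1)(\psi')^{p_1-2}\psi'',
\]
so the hypothesis of Theorem~\ref{thm1.1} reduces to the pointwise scalar inequality
\[
k\phi^{p_1-1}+(p_1-1)\phi^{p_1-2}\phi'-C\phi^{p_2}\ge D\quad\text{on }[0,R],\qquad \phi:=\psi'>0,
\]
after which Theorem~\ref{thm1.1} returns $\lambda_{1,p}(\Omega)\ge\bigl(C/(p-1)\bigr)^{p-1}D$. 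The task is therefore to design $\psi$ so that this pointwise inequality holds with $(C,D)$ optimising the output.

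For $p\ge 2$ I would take $p_1=p_2=p$, which satisfies $p=p_2/(p_2-p_1+1)$. The substitution $\eta:=\phi^{p-1}$ converts the scalar inequality into $\eta'\ge D-k\eta+C\eta^{p/(p-1)}$, an autonomous Riccati-type condition. Looking for equality and linearising around the equilibrium $\eta_{0}=\bigl((p-1)k/(pC)\bigr)^{p-1}$, one obtains a genuine Riccati equation whose solutions are built from $\tan$; the requirement that $\phi$ stay non-negative at the left endpoint $r=0$ is strictly stronger than finiteness of $\phi$ at $r=R$ and limits the tangent argument to an arc of length at most $\pi/2+\arctan(1/v)$ for a parameter $v$ measuring the perturbation size from equilibrium. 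Using the elementary bound $\arctan(x)\le x$, the choice $v=\pi/L$ with $L=1+kR/p$ turns out to be admissible, and tracing this through the output $\bigl(C/(p-1)\bigr)^{p-1}D$ produces exactly the factor $(k/p)^{p}\bigl(1+\pi^{2}/L^{2}\bigr)$.

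For $1<p<2$ the nonlinearity $\eta^{p/(p-1)}$ is superquadratic and the Riccati manipulation above breaks down. I would instead apply Theorem~\ref{thm1.1} with $p_1=2,\ p_2=p/(p-1)$, which still satisfies $p=p_2/(p_2-p_1+1)$, reducing the required condition to $\psi''+k\psi'-C(\psi')^{p/(p-1)}\ge D$; this can be treated by a similar analysis on $\phi=\psi'$, but because the output of Theorem~\ref{thm1.1} carries the power $p-1$, the natural parameterisation $C=(p-1)(k/p)\alpha$, $D=(k/p)\beta$ only yields the improvement factor $\bigl(1+\pi^{2}/L^{2}\bigr)^{p-1}$ rather than $\bigl(1+\pi^{2}/L^{2}\bigr)$, explaining the exponent $\min\{p-1,1\}$ in \eqref{1.6}. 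The principal obstacle will be manufacturing the test function $\psi$ so that $\phi$ stays positive and finite on all of $[0,R]$ while saturating the scalar inequality with the optimal constants; it is the enforcement of $\phi\ge 0$ at $r=0$ (costing the extra $\arctan(1/v)$ in the arc-length budget) that is responsible for the denominator $(1+kR/p)^{2}$ in \eqref{1.6} rather than $R^{2}$.
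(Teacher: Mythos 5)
Your plan is essentially the paper's own proof: the paper takes the radial test function $f=r-\frac{p}{k}\ln\sin a\bigl(r+\frac{p}{k}\bigr)$ with $a=(\pi-\varepsilon)/(R+\frac{p}{k})$, which is exactly the shifted $\tan/\cot$ profile you describe (the shift $\frac{p}{k}$, controlled via $\arctan x\le x$, is what keeps $\dot f>0$ at $r=0$ and produces the denominator $(1+\frac{k}{p}R)^2$), and it then feeds this into Theorem~\ref{thm1.1} after reducing matters to the scalar inequality $u'\ge D-ku+Cu^{p/(p-1)}$, split into the cases $p\le 2$ and $p\ge 2$ with the two exponents you predict. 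The only divergence is that you assign $(p_1,p_2)=(p,p)$ to $p\ge2$ and $(2,\tfrac{p}{p-1})$ to $1<p<2$ while the paper does the opposite; this is immaterial (though your stated reason for the swap is slightly off, since the nonlinearity exponent $p/(p-1)$ is the same either way) because both choices yield the identical scalar differential inequality, the unknown being $\psi'$ in one case and $(\psi')^{p-1}$ in the other.
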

Notice that the condition ``$|dr|=1$ and $\Delta r\geq k$ " would imply that $\lambda_{1,p}(\Omega)\geq \big(\frac{k}{p}\big)^p$ by Theorem \ref{thm2.2}. We now introduce a refined lower bound for of $\lambda_{1,p}(\Omega)$ that depends on $R=\sup\limits_{x,y\in\Omega} [r(x)-r(y)],$ the ``radius" of $\Omega$ in some sense.
Specifically,
 $$\lambda_{1,p}(\Omega)\geq \Big(\frac{k}{p}\Big)^p+\frac{C(p,k)}{R^2}+O(R^{-3})$$
 as $R$ tends to infinity.
 \par If we set $r$ to be the distance function of a point or a zero measure set, we can derive the lower bound for $\lambda_{1,p}$ for some special manifolds. For example, by applying the Hessian comparison theorem we can obtain the lower bound of $\lambda_{1,p}(B_o(R))$ where $B_o(R)$ is the geodesic ball in manifold with sectional curvature bounded above. Further details are provided in Corollary \ref{coro3.1}.
 \par Another application of Theorem \ref{thm1.2} is demonstrated in the following corollary:
\begin{corollary}\label{coro1.3}
 Let $\Omega$ be a bounded domain of an $n+1-$dimensional Riemannian manifold  satisfying that ${\rm Ric}[g]\geq-ng$ with  smooth  boundary $\partial\Omega.$ If the inscribed radius of $\Omega$ is $R$ and the mean curvature $H$ of $\partial\Omega$ with respect to outer normal satisfies that $H\geq k$ for some constant $k\geq n,$ then for any $p>1,$
   \begin{equation}\label{1.7}
     \lambda_{1,p}(\Omega)\geq
       \Big(\frac{k}{p}\Big)^p\Big(1+\frac{\pi^2}{(1+\frac{k}{p}R)^2}\Big)^{\min\{p-1,1\}}.
   \end{equation}
\end{corollary}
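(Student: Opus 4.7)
The plan is to apply Theorem~\ref{thm1.2} with the function
\[
r(x) \;=\; R - \rho(x),\qquad \rho(x) := d(x, \partial\Omega),\quad x\in\Omega.
\]
Since $R$ is the inscribed radius of $\Omega$, $\rho$ takes values in $[0,R]$, so $r:\Omega\to[0,R]$; and $|dr|=|d\rho|=1$ almost everywhere by the eikonal property of a distance function. Everything therefore reduces to establishing $\Delta r\geq k$, equivalently $\Delta\rho\leq -k$, in a sense strong enough to feed into Theorem~\ref{thm1.2}.

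First I would work on the regular set, i.e.\ on the complement of the cut locus of $\partial\Omega$, where $\rho$ is smooth. In Fermi coordinates along the inward unit normal, the tangential Hessian of $\rho$ at a boundary point $p$ coincides (with the appropriate sign) with the second fundamental form of $\partial\Omega$ with respect to the inward normal, so $\Delta\rho(p) = -H(p)\leq -k$ by hypothesis. Along a unit-speed minimizing normal geodesic $\gamma$ from $\partial\Omega$ into $\Omega$, the Bochner identity applied to $\rho$ (using $|\nabla\rho|\equiv 1$) reads
\[
\frac{d}{dt}(\Delta\rho\circ\gamma) \;=\; -|\nabla^{2}\rho|^{2} - \mathrm{Ric}(\dot\gamma,\dot\gamma).
\]
Because $\nabla^{2}\rho(\nabla\rho,\cdot)=0$, the Hessian acts on an $n$-dimensional subspace, so Cauchy--Schwarz gives $|\nabla^{2}\rho|^{2}\geq (\Delta\rho)^{2}/n$; together with $\mathrm{Ric}\geq -ng$ this produces the Riccati-type inequality
\[
\frac{d}{dt}(\Delta\rho\circ\gamma)\;\leq\;-\frac{(\Delta\rho\circ\gamma)^{2}}{n}+n.
\]
Since $k\geq n$, the constant value $-k$ is an upper barrier for this ODE: at $\Delta\rho\circ\gamma=-k$ the right-hand side equals $-(k^{2}-n^{2})/n\leq 0$, so $\Delta\rho\circ\gamma$ cannot cross $-k$ from below. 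Starting from $\Delta\rho(\gamma(0))\leq -k$, an elementary ODE comparison then yields $\Delta\rho\leq -k$ throughout the regular region.

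The step I expect to be the main technical obstacle is promoting this smooth inequality to one valid across the cut locus of $\partial\Omega$, in a sense strong enough for the proof of Theorem~\ref{thm1.2}. I would handle this by the classical Calabi trick: at any cut locus point $x$, pick a minimizing geodesic from $\partial\Omega$ to $x$ and use the arc-length along it as a smooth local upper barrier for $\rho$ near $x$; the Riccati argument above applied to this barrier gives $\Delta\rho(x)\leq -k$ in the barrier, and hence distributional, sense. Consequently $\Delta r\geq k$ holds distributionally on all of $\Omega$, and invoking Theorem~\ref{thm1.2} with the hypothesized constants $R$ and $k$ delivers exactly the stated bound \eqref{1.7}.
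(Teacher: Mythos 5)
Your proposal is correct and follows essentially the same route as the paper: set $r=R-\rho$ with $\rho=\mathrm{dist}(\cdot,\partial\Omega)$, propagate the boundary inequality $\Delta\rho\le -k$ inward along normal geodesics via the Riccati/Bochner identity together with $|\mathrm{Hess}\,\rho|^2\ge(\Delta\rho)^2/n$ and $\mathrm{Ric}\ge -ng$, and then invoke Theorem~\ref{thm1.2}. The only differences are cosmetic or in your favor: the paper compares $h=-\Delta\rho/n$ with the explicit solution $y(s)=\coth(-s+\mathrm{arccoth}\,y(0))$ of $y'=y^2-1$ using an integrating-factor argument, whereas you use the constant $-k$ as a barrier (valid precisely because $k\ge n$), and you additionally supply the Calabi-trick justification across the cut locus, which the paper leaves implicit by working almost everywhere.
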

It is important to note that if $k>n,$ then the inscribed radius $R$ of $\Omega$ must satisfy that $R<\text{arccoth}\frac{k}{n}.$ Consequently, $\Omega$ is compact as long as $\partial\Omega$ is compact. For further details on this geometric property, readers are encouraged to consult in  \cite{kasue1983ricci},\cite{li2015rigidity}.
\par With the presentations above, we finally get the estimate of the $p-$eigenvalue of geodesic balls in AHE manifold (including the hyperbolic space).
\begin{theorem}\label{thm1.4}
  Let $(M,g)$ be a $C^{3,\alpha} \ (\alpha\in(0,1))\  n+1-$dimensional asymptotically hyperbolic Einstein manifold with conformal infinity of nonnegative Yamabe type, then for any $p>1,\ o\in M$ and as $R\rightarrow \infty,$
     \begin{equation}\label{1.8}
     \begin{aligned}
 &\big(\frac{n}{p}\big)^p+\big(\frac{n}{p}\big)^{p-2}\min\{p-1,1\}\frac{\pi^2}{R^2}+O(R^{-3})\leq \lambda_{1,p}(B_o(R)) \\ & \leq \Big(\frac{n}{p}\Big)^p+\Big(\frac{n}{p}\Big)^{p-2}\frac{p}{2}\frac{\pi^2}{R^2}+O(\frac{1}{R^{\min\{p+1,4\}}}).
     \end{aligned}
   \end{equation}
\end{theorem}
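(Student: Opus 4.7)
The plan is to prove the two inequalities of (\ref{1.8}) separately: the lower bound as a direct application of Theorem \ref{thm1.2} with $r(x)=d(o,x)$, and the upper bound by substituting a carefully chosen radial trial function into the Rayleigh quotient (\ref{1.3}).

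\textbf{Lower bound.} I would take $\Omega=B_o(R)$ and $r(x)=d(o,x)$. The AHE assumption together with the non-negative Yamabe conformal infinity hypothesis yields, via the Laplacian comparison arguments used in \cite{jin2024asymptotic} to treat the $p=2$ case, an estimate
\[
\Delta r \;\geq\; n-\eta(R)
\]
almost everywhere on $B_o(R)$, with $\eta(R)\to 0$ as $R\to\infty$ faster than any polynomial (in fact exponentially). Singularities at $o$ and on the cut locus are handled in the usual barrier sense. Applying Theorem \ref{thm1.2} with $k=n-\eta(R)$ gives
\[
\lambda_{1,p}(B_o(R))\;\geq\;\Big(\tfrac{n-\eta(R)}{p}\Big)^p\Big(1+\tfrac{\pi^2}{(1+(n-\eta(R))R/p)^2}\Big)^{\min\{p-1,1\}},
\]
and a Taylor expansion as $R\to\infty$ immediately produces the stated lower bound, the contribution of $\eta(R)$ being absorbed comfortably into $O(R^{-3})$.

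\textbf{Upper bound.} I would plug into (\ref{1.3}) the radial test function
\[
u(x)\;=\;e^{-n r(x)/p}\,\sin\!\Big(\tfrac{\pi r(x)}{R}\Big),
\]
extended by zero outside $B_o(R)$. This is modeled on the exact first eigenfunction on a hyperbolic ball (cf.\ \cite{savo2009lowest,jin2024asymptotic}). In geodesic polar coordinates centered at $o$ the volume element factors as $dv=J(r,\theta)\,dr\,d\theta$, and the $C^{3,\alpha}$ AHE assumption with non-negative Yamabe conformal infinity guarantees
\[
J(r,\theta)\;=\;\sinh^n r\,\bigl(1+O(e^{-2r})\bigr)
\]
uniformly in $\theta$ as $r\to\infty$. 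The Rayleigh quotient for $u$ therefore reduces, modulo an exponentially small AHE correction, to the corresponding one-dimensional quotient on the hyperbolic ball. Expanding $|u'(r)|^p$ in the small parameter $\pi/R$ and integrating term by term against $\sinh^n r\,dr$ produces the expansion $(n/p)^p+(n/p)^{p-2}(p/2)\pi^2/R^2+O(R^{-\min\{p+1,4\}})$.

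\textbf{Main obstacle.} The delicate part is the upper bound: extracting the exact coefficient $p/2$ for general $p>1$. For $p\neq 2$ the integrand $|u'|^p$ is not polynomial in $\pi/R$, so one must expand $|a+\epsilon b|^p = a^p+p\,a^{p-1}\epsilon b+\tfrac{p(p-1)}{2}a^{p-2}\epsilon^2 b^2+\cdots$ with a remainder whose order depends on the smoothness of $t\mapsto |t|^p$; this is the origin of the threshold $\min\{p+1,4\}$ in the error exponent. In parallel, the endpoints $r\to 0^+$ and $r\to R^-$ (where $\sin(\pi r/R)$ vanishes) require separate careful treatment, and the AHE correction $O(e^{-2r})$ in $J(r,\theta)$ must be uniformly absorbed into the error. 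By contrast, once the Laplacian comparison for $\Delta r$ is established, the lower bound reduces to a one-line Taylor expansion of the conclusion of Theorem \ref{thm1.2} and is comparatively easy.
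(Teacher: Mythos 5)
Your lower bound rests on the claim that the distance function $r(x)=d(o,x)$ satisfies $\Delta r\geq n-\eta(R)$ almost everywhere on $B_o(R)$ with $\eta(R)\to0$, and this is the step that fails. A pointwise \emph{lower} bound on $\Delta r$ for the distance from a point requires an \emph{upper} bound on sectional curvature (Hessian comparison), and an AHE manifold only has $K\to-1$ near infinity; on a fixed compact neighbourhood of $o$ the curvature can be positive and $\Delta r$ can drop below $n$ there, independently of $R$, so no such $\eta(R)$ exists. Moreover, even where the pointwise bound holds, the distributional Laplacian of $d(o,\cdot)$ carries a \emph{non-positive} singular part on the cut locus, which is the wrong sign for the integration by parts behind Theorem \ref{thm1.1}; the ``barrier sense'' only rescues upper bounds on $\Delta r$, not lower bounds. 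The paper avoids both problems by measuring distance from the hypersurface $\partial E_\varepsilon=\{x=\varepsilon\}$, a level set of the geodesic defining function: Lemma \ref{lemma4.1} uses the Einstein equation and $S_{\hat g}\geq0$ to get the initial mean curvature bound $H_\varepsilon\geq n$, the Riccati comparison (which needs only ${\rm Ric}\geq-ng$) propagates this inward to $-\Delta\rho\geq n$ with the cut-locus singular part now having the favourable sign, and then the inclusion $B_o(d+R)\subseteq E_{\varepsilon_1e^{-R}}$ together with domain monotonicity and Corollary \ref{coro1.3} gives (\ref{4.8}). Without an argument of this type your lower bound is unsupported.

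For the upper bound your test function is the right one, but the asymptotic $J(r,\theta)=\sinh^nr\,(1+O(e^{-2r}))$ for the volume density in polar coordinates at $o$ is not correct as stated: by Bishop--Gromov the ratio $J(r,\theta)/\sinh^nr$ is non-increasing and converges to a limit that is in general $\theta$-dependent and strictly less than $1$, and polar coordinates degenerate at the cut locus of $o$. The paper sidesteps all of this with Cheng's comparison $\lambda_{1,p}(B_o(R))\leq\lambda_{1,p}(B^{\mathbb H}(R))$, valid from ${\rm Ric}\geq-ng$ alone, and then carries out the one-dimensional computation on the exact hyperbolic ball, where the error analysis yielding $O(R^{-1-p})+O(R^{-4})$ is elementary (Lemma \ref{lemma4.2}). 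Your identification of the source of the exponent $\min\{p+1,4\}$ --- the limited smoothness of $t\mapsto|t|^p$ in expanding $|u'|^p$ --- is the right heuristic, but the reduction to the one-dimensional problem should go through the comparison theorem rather than an unproved uniform expansion of the Jacobian.
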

We need to expound that the asymptotical behavior in formula (\ref{1.8}) is
related to the geometry structure of manifolds. As it is showed in \cite{jin2024asymptotic} for the special case $p=2,$ the formula is related to the connectivity of manifolds at infinity.
\par To be more specific, let $(M,g)$ be an $n+1-$dimensional AHE manifold with conformal infinity $(\partial M, [\hat{g}])$, and it is shown in Witten–Yau \cite{witten1999connectedness}, or Cai-Galloway \cite{cai1999boundaries} or Theorem 5 in \cite{hijazi2020cheeger} that
$$Y(\partial M, [\hat{g}])\geq 0\Rightarrow \partial M\ \text{is connected ($M$ has one end)}.$$
Furthermore, combining the results of Lee \cite{lee1995spectrum} and Wang \cite{wang2001conformally}, we have:
$$Y(\partial M, [\hat{g}])\geq 0\Rightarrow\lambda_{1,2}(M)=\frac{n^2}{4}\Rightarrow \partial M\ \text{is connected ($M$ has one end)}.$$
Based on theorem 0.5 and 0.6 in \cite{li2002complete}, the author in \cite{jin2024asymptotic} removed the assumption that $(M,g)$ is AHE and obtained the following property:
$$\begin{cases}
 {\rm Ric} \geq -ng
 \\
 \lambda_{1,2}(B(o,R))=\frac{n^2}{4}+\frac{\pi^2}{R^2}+O(R^{-3})
 \end{cases}
 \Rightarrow  M\ \text{has one end.}$$
We now extend this property to the general $p-$case and the proof is mainly based on Theorem 1.4 in \cite{sung2014sharp}.
\begin{prop}\label{prop1.5}
  Let $(M,g)$ be a complete manifold of dimension $n+1\geq 3$ with ${\rm Ric}\geq-ng.$ If
     \begin{equation}\label{1.9}
\lambda_{1,p}(B_o(R))\geq\big(\frac{n}{p}\big)^p+\big(\frac{n}{p}\big)^{p-2}\min\{p-1,1\}\frac{\pi^2}{R^2}+O(R^{-3}),\ \ R\rightarrow\infty,
   \end{equation}
   for some $o\in M$ and some $p\in (\frac{8}{7},8)\bigcap (1,\frac{n^2}{2(n-1)}],$ then $M$ has only one end.
\end{prop}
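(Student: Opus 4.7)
The strategy is to argue by contradiction via the Sung-Wang splitting theorem. Suppose that $M$ has two or more ends. Sending $R\to\infty$ in (\ref{1.9}) gives $\lambda_{1,p}(M)\ge(n/p)^p$. On the other hand, under $\mathrm{Ric}\ge -ng$ the Bishop-Gromov volume comparison yields $\mathrm{Vol}(B_o(R))\le Ce^{nR}$ for $R$ large, and the classical Cheng-type volume-growth upper bound for the first $p$-eigenvalue then forces $\lambda_{1,p}(M)\le(n/p)^p$. Hence $\lambda_{1,p}(M)=(n/p)^p$. With $p$ in the range $(\frac{8}{7},8)\cap(1,\frac{n^2}{2(n-1)}]$ stipulated in the hypothesis, Theorem 1.4 of \cite{sung2014sharp} now applies in this equality case: if $M$ had two or more ends, it would be isometric to a warped product $(\mathbb{R}\times N,\,dt^2+\varphi(t)^2 g_N)$ along a geodesic line $\gamma$, with $N$ compact and $\varphi$ of exponential growth as $|t|\to\infty$. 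It remains to derive, on this warped-product model, an upper bound for $\lambda_{1,p}(B_o(R))$ that contradicts (\ref{1.9}).

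For that upper bound, I would place $o$ on $\gamma$ and let $t$ denote signed arc-length along $\gamma$. Because $\gamma$ is a two-sided line, $B_o(R)$ contains an arc of $\gamma$ of length approximately $2R$ (rather than $R$). Following the construction used in \cite{jin2024asymptotic} for $p=2$, take the competitor
\[
u(x)=\psi(t(x))\,\phi_R(t(x)),\qquad \psi(t)=\varphi(t)^{-n/p},
\]
where $\phi_R$ is a sine-type Dirichlet cutoff on $[-R,R]$. Expanding $|\nabla(\psi\phi_R)|^p$ around $|\nabla\psi|^p$ to second order in $\phi_R$ and inserting into (\ref{1.3}), the leading term produces $(n/p)^p$ (since $\psi$ is the formal ground state of the splitting), and the subleading term is a one-dimensional Poincar\'e-type quantity for $\phi_R$ on an interval of length $2R$. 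Its minimum $\pi^2/(2R)^2$ contributes a correction of size $(n/p)^{p-2}\pi^2/(4R^2)$, giving
\[
\lambda_{1,p}(B_o(R))\le\Big(\frac{n}{p}\Big)^p+\Big(\frac{n}{p}\Big)^{p-2}\frac{\pi^2}{4R^2}+o(R^{-2}),
\]
which is strictly less than the right-hand side of (\ref{1.9}) whenever $p>5/4$. For the remaining range $p\in(1,5/4]$, a slightly sharper competitor (an asymmetric cutoff, or a tighter second-order expansion of the $p$-energy exploiting the two ends separately) should push the $R^{-2}$-coefficient strictly below $(n/p)^{p-2}\min\{p-1,1\}\pi^2$, producing the desired contradiction across the whole range of $p$.

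The main technical obstacle will be this last step: because the $p$-energy is nonlinear, the second-order expansion of $|\nabla(\psi\phi_R)|^p$ has to be carried out with care, tracking both the warping $\varphi(t)$ and the transverse fluctuations along $N$; this bookkeeping is what produces the coefficient $(n/p)^{p-2}$ and controls the $R^{-2}$ remainder. The rather specific numerical range of $p$ appearing in the hypothesis enters only through invoking Theorem 1.4 of \cite{sung2014sharp} and plays no further role in the test-function computation.
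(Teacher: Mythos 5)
Your overall architecture matches the paper's: pass to the limit to get $\lambda_{1,p}(M)\geq(n/p)^p$, combine with the Cheng-type upper bound to get equality, invoke Theorem 1.4 of \cite{sung2014sharp}, and then rule out the split cases by exhibiting a test function on the warped-product models whose Rayleigh quotient undercuts (\ref{1.9}). The gap is in the quantitative heart of that last step. On $(\mathbb{R}\times N,dt^2+e^{2t}g_N)$ with the test function $f=e^{-nt/p}\cos\frac{\pi t}{2R}$ on $(-R,R)\times N$, the Rayleigh quotient computes \emph{exactly} to $\bigl(\frac{n^2}{p^2}+\frac{\pi^2}{4R^2}\bigr)^{p/2}$ (the volume factor $e^{nt}$ cancels $|f'|^p$'s prefactor, and the phase shift in $\bigl|\frac{n}{p}\cos\theta+\frac{\pi}{2R}\sin\theta\bigr|^p$ integrates away by $\pi$-periodicity of $|\cos|^p$). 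Expanding $(1+\varepsilon)^{p/2}$ gives the $R^{-2}$ coefficient $\bigl(\frac{n}{p}\bigr)^{p-2}\frac{p}{8}\pi^2$, not the $\bigl(\frac{n}{p}\bigr)^{p-2}\frac{\pi^2}{4}$ you claim: your ``one-dimensional Poincar\'e quantity'' heuristic is the $p=2$ answer and drops the factor $p/2$ coming from the nonlinearity of the $p$-energy. With the correct coefficient the contradiction $\min\{p-1,1\}>\frac{p}{8}$ holds precisely for $p\in(\frac{8}{7},8)$, so no ``sharper competitor'' is needed for $p\in(\frac{8}{7},\frac{5}{4}]$ --- the same test function, computed correctly, already covers the whole range. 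As written, your proposal leaves that subrange to an unspecified improved construction, which is a genuine hole.

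Relatedly, your closing remark that the interval $(\frac{8}{7},8)$ ``enters only through invoking Theorem 1.4 of \cite{sung2014sharp}'' is backwards: Sung--Wang only requires $p\leq\frac{n^2}{2(n-1)}$, and the endpoints $\frac{8}{7}$ and $8$ are exactly the solutions of $\min\{p-1,1\}=\frac{p}{8}$, i.e.\ they are generated by the test-function comparison. With your coefficient $\frac{1}{4}$ no upper restriction $p<8$ would ever be needed, which is a signal that the coefficient is wrong. Two further minor points: the splitting theorem gives the explicit warping functions $e^{t}$ (for $n\geq3$) and $\cosh t$ (for $n=2$), not merely ``exponential growth,'' and since your competitor depends only on $t$ there are no transverse fluctuations along $N$ to track --- the fiber integral factors out as $\mathrm{Vol}(N)$ against the warped volume element, which is what makes the computation closed-form rather than perturbative.
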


Here is the outline of this paper: we use the Young's inequality to prove Theorem \ref{thm1.1} and as applications, we prove Proposition \ref{prop2.1} and Theorem \ref{thm2.2} and  Proposition \ref{prop2.5}  in Section \ref{sec2} and present additional applications of the theorems. In Section \ref{sec3}, we construct a new test function based on the distance function to prove Theorem \ref{thm1.2}. The techniques applied differ significantly for the cases where $p\in(1,2]$ and $p\in[2,+\infty).$ Additionally, Corollary \ref{coro1.3} is verified using classical techniques from Riemannian geometry. Section \ref{sec4} explores the concept of asymptotically hyperbolic Einstein (AHE) manifolds and concludes with a proof of Theorem \ref{thm1.4}. Finally, we prove Proposition \ref{prop1.5} in Section
\ref{sec5}.

\section{Estimates for lower bound of eigenvalues via functions}\label{sec2}
We will use the Young's inequality to prove Theorem \ref{thm1.1} in this section.
Under the conditions of Theorem \ref{thm1.1}, for any function $v\in C^\infty_0(\Omega),$ we have that
\begin{equation}\label{2.1}
\begin{aligned}
  D\int_\Omega |v|^p d\upsilon_g &\leq \int_\Omega |v|^p\cdot\Delta_{p_1} f d\upsilon_g -C\int_\Omega |v|^p\cdot|\nabla f|^{p_2}d\upsilon_g
  \\ &=-p\int_\Omega|v|^{p-1}\cdot g(\nabla|v|,|\nabla f|^{p_1-2}\nabla f) d\upsilon_g  -C\int_\Omega |v|^p\cdot|\nabla f|^{p_2}d\upsilon_g
  \\ &\leq p\int_\Omega|v|^{p-1}\cdot|\nabla v|\cdot|\nabla f|^{p_1-1}d\upsilon_g -C\int_\Omega |v|^p\cdot|\nabla f|^{p_2}d\upsilon_g
  \\ &\leq p\int_\Omega \Big[\frac{(|v|^{p-1}\cdot|\nabla f|^{p_1-1}\cdot\theta)^q}{q}+\frac{(\frac{|\nabla v|}{\theta})^p}{p}\Big]d\upsilon_g -C\int_\Omega |v|^p\cdot|\nabla f|^{p_2}d\upsilon_g
\end{aligned}
\end{equation}
Here $q$ is the conjugate
of $p$ and the last inequality holds because of Young's inequality. If we choose $\theta=\left(\frac{Cq}{p}\right)^{\frac{1}{q}}$ and notice that$(p_1-1)q=p_2,$ then we get
\begin{equation}\label{2.2}
  D\int_\Omega |v|^pd\upsilon_g \leq \frac{1}{\theta^p}\int_\Omega |\nabla v|^p d\upsilon_g .
\end{equation}
Since $C^\infty_0(\Omega)$ is dense in $W^{1,p}_0(\Omega),$ we get that
$$\lambda_{1,p}(\Omega)\geq\theta^p D=\left(\frac{C}{p-1}\right)^{p-1}\cdot D$$

\subsection{Some applications of Theorem \ref{thm1.1}}\label{sec2.1}
As the first application, we use Theorem \ref{thm1.1} to obtain a Barta's inequality for $p-$Laplace operator:
 \begin{prop}[Theorem 2.1 in \cite{allegretto1998picone}]\label{prop2.1}
 Let $\Omega$ be a bounded smooth domain in a Riemannian manifold.  If there exists a positive function $v\in W^{1,p}(\Omega)$ satisfying that
 $\Delta_{p}v\leq -\mu v^{p-1}$ in $\Omega$ for some constant $\mu.$ Then
$
  \lambda_{1,p}(\Omega)\geq \mu.
$
\end{prop}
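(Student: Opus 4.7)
The plan is to reduce Proposition \ref{prop2.1} to Theorem \ref{thm1.1} by applying it to the logarithmic transform $f = -\log v$. This is the natural candidate because the divergence structure of $\Delta_p v$ transforms nicely under $-\log$: the $v^{p-1}$ weight in Barta's hypothesis is precisely what is needed to cancel the denominators that appear when one computes $\Delta_p(\log v)$.

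Concretely, I would carry out the following pointwise computation. With $f = -\log v$ we have $\nabla f = -\nabla v/v$, so $|\nabla f|^{p-2}\nabla f = -|\nabla v|^{p-2}\nabla v / v^{p-1}$. Differentiating and using the product rule for divergences gives
\begin{equation*}
\Delta_p f \;=\; -\frac{\Delta_p v}{v^{p-1}} + (p-1)\frac{|\nabla v|^p}{v^p} \;=\; -\frac{\Delta_p v}{v^{p-1}} + (p-1)|\nabla f|^p.
\end{equation*}
The Barta hypothesis $\Delta_p v \leq -\mu v^{p-1}$ gives $-\Delta_p v/v^{p-1} \geq \mu$, so combining with the identity above yields
\begin{equation*}
\Delta_p f - (p-1)|\nabla f|^p \;\geq\; \mu
\end{equation*}
on $\Omega$. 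This is exactly the hypothesis of Theorem \ref{thm1.1} with the choice $p_1 = p_2 = p$, $C = p-1$, $D = \mu$; one verifies the compatibility condition $p = \frac{p_2}{p_2 - p_1 + 1}$ trivially. Plugging these values into the conclusion \eqref{1.5} gives
\begin{equation*}
\lambda_{1,p}(\Omega) \;\geq\; \Big(\tfrac{C}{p-1}\Big)^{p-1}\cdot D \;=\; \mu,
\end{equation*}
which is the desired Barta inequality.

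The only delicate point I anticipate is ensuring that $f = -\log v$ is an admissible test function, i.e.\ that it lies in $W^{1,p}_{\rm loc}(\Omega)$ so that Theorem \ref{thm1.1} applies. Since $v$ is only assumed to be positive in $W^{1,p}(\Omega)$, it could in principle approach zero on compact subsets. I would handle this by working with the regularized function $f_\varepsilon = -\log(v+\varepsilon)$, for which $|\nabla f_\varepsilon| = |\nabla v|/(v+\varepsilon) \leq |\nabla v|/\varepsilon$ and the same Bochner-type identity produces $\Delta_p f_\varepsilon - (p-1)|\nabla f_\varepsilon|^p \geq \mu \cdot v^{p-1}/(v+\varepsilon)^{p-1}$; applying Theorem \ref{thm1.1} to $f_\varepsilon$ and then letting $\varepsilon \downarrow 0$ (using dominated convergence on a fixed test function $\varphi \in C^\infty_0(\Omega)$ in the weak formulation) recovers the conclusion. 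The computation itself is routine; the logarithmic substitution and the matching of parameters are the whole content of the proof.
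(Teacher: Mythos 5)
Your proposal is correct and is essentially identical to the paper's own proof: the paper also sets $f=-\ln v$, derives the same identity $\Delta_p f-(p-1)|\nabla f|^p=-\Delta_p v/v^{p-1}\geq\mu$ (written there as a formula for $-\Delta_p e^{-f}$), and applies Theorem \ref{thm1.1} with $p_1=p_2=p$, $C=p-1$, $D=\mu$. Your additional regularization $f_\varepsilon=-\log(v+\varepsilon)$ is a reasonable extra precaution that the paper simply omits.
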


\begin{proof}
For any $f\in W^{1,p}(\Omega),$
\begin{equation}\label{2.3}
  \begin{aligned}
    -\Delta_p e^{-f}&=-\text{div}(|\nabla e^{-f}|^{p-2}\nabla e^{-f})=\text{div}(e^{-(p-1)f}|\nabla f|^{p-2}\nabla f)
    \\&=e^{-(p-1)f}\text{div}(|\nabla f|^{p-2}\nabla f)+g(\nabla e^{-(p-1)f},|\nabla f|^{p-2}\nabla f)
    \\&=e^{-(p-1)f}\Delta_p f-(p-1)e^{-(p-1)f}|\nabla f|^{p}
    \\&=(e^{-f})^{p-1}\cdot[\Delta_p f-(p-1)|\nabla f|^{p}]
  \end{aligned}
\end{equation}
If we set $f=-\ln v,$ then
$$
 \Delta_p f-(p-1)|\nabla f|^{p}=-\frac{\Delta_p v}{v^{p-1}}\geq\mu
$$
Then $\lambda_{1,p}(\Omega)\geq \mu$ by Theorem \ref{thm1.1}.
\end{proof}

\par Another application of Theorem \ref{thm1.1} is illustrated in the following proposition:
 \begin{theorem}\label{thm2.2}
   Let $\Omega$ be a bounded smooth domain in a Riemannian manifold
and $f\in W^{1,q}(\Omega)$ satisfying that
$|\nabla f|\leq a$ and $\Delta_{q} f\geq b$ for some constants $a, b > 0$ and $q>1.$ Then for any $p>1,$
$$
  \lambda_{1,p}(\Omega)\geq\frac{b^p}{p^pa^{p(q-1)}}.
$$
 \end{theorem}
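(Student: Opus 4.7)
The plan is to apply Theorem \ref{thm1.1} with $p_1 = q$ and a carefully chosen pair $(p_2, C)$, using the hypothesis $|\nabla f| \leq a$ to convert the one-sided control $\Delta_q f \geq b$ into the mixed divergence-plus-gradient inequality required by Theorem \ref{thm1.1}. First I would solve the relation $p = \frac{p_2}{p_2 - p_1 + 1}$ for $p_2$ with $p_1 = q$, obtaining $p_2 = \frac{p(q-1)}{p-1}$, and check the admissibility condition $p_2 > p_1 - 1 = q-1$, which reduces to $\frac{p}{p-1} > 1$ and holds for every $p > 1$. Since $|\nabla f| \leq a$ a.e.\ on the bounded domain $\Omega$, the function $f$ is Lipschitz and hence lies in $W^{1,s}(\Omega)$ for all $s \geq 1$; in particular $f \in W^{1,p_1}_{\rm loc}(\Omega)$.

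Next I would exploit the bound $|\nabla f|^{p_2} \leq a^{p_2}$ together with $\Delta_q f \geq b$ to deduce, for each $C > 0$, the pointwise (weak) inequality
$$
\Delta_{p_1} f - C |\nabla f|^{p_2} \geq b - C a^{p_2} =: D(C).
$$
As long as $C < b / a^{p_2}$, $D(C)$ is positive and Theorem \ref{thm1.1} applies, yielding
$$
\lambda_{1,p}(\Omega) \geq \left(\frac{C}{p-1}\right)^{p-1} D(C).
$$
The remaining task is to select $C$ so as to maximize this lower bound.

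Finally, I would carry out the one-variable optimization of $C \mapsto C^{p-1}(b - C a^{p_2})$. Differentiating and setting the derivative to zero gives the optimum
$$
C^* = \frac{(p-1)\,b}{p\, a^{p_2}}, \qquad D(C^*) = \frac{b}{p}.
$$
Substituting back and using the identity $p_2(p-1) = p(q-1)$ (which is exactly the definition of $p_2$) collapses the expression to
$$
\lambda_{1,p}(\Omega) \geq \left(\frac{C^*}{p-1}\right)^{p-1} D(C^*) = \frac{b}{p}\cdot \frac{b^{p-1}}{p^{p-1} a^{p_2(p-1)}} = \frac{b^p}{p^p\, a^{p(q-1)}},
$$
which is the claimed estimate. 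There is no substantive obstacle: the whole argument is a reduction to Theorem \ref{thm1.1} plus an elementary maximization, and the only points that require any care are the verification of the admissibility condition $p_2 > p_1 - 1$ and the bookkeeping of the exponent identity $p_2(p-1) = p(q-1)$.
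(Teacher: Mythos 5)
Your proposal is correct and follows essentially the same route as the paper: both reduce to Theorem \ref{thm1.1} with $p_1=q$ and $p_2=\frac{p(q-1)}{p-1}$, the only cosmetic difference being that you derive the constant $C=\frac{(p-1)b}{pa^{p_2}}$ by optimizing $C\mapsto C^{p-1}(b-Ca^{p_2})$, whereas the paper simply writes down this optimal choice directly. All your computations (admissibility of $p_2$, the value of $D(C^*)$, and the exponent identity $p_2(p-1)=p(q-1)$) check out.
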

 This result extends Theorem 1.1 in \cite{carvalho2022fundamental}, which was initially discussed in a special case where $p=q.$ Theorem \ref{thm2.2} broadens the applicability as it does not restrict the relationship between $p$ and $q.$

 \begin{proof}
 Suppose that $f,q,b,a$ are defined as in Theorem \ref{thm2.2}. For any $p>1,$ set
$$
p_2=\frac{p(q-1)}{p-1},\ \ \ C=\frac{p-1}{p}\cdot\frac{b}{a^{p_2}},\ \ \ D=\frac{b}{p}.
$$
 Then $p=\frac{p_2}{p_2-q+1}$ and
$$
\Delta_{q}f-C|\nabla f|^{p_2}\geq b-\frac{p-1}{p}\cdot\frac{b}{a^{p_2}}\cdot a^{p_2}=D.
$$
Thus, the first eigenvalue $\lambda_{1,p}(\Omega)$ is bounded below by:
$$
\lambda_{1,p}(\Omega)\geq\Big(\frac{C}{p-1}\Big)^{p-1}\cdot D=\frac{b^p}{p^pa^{p(q-1)}}.
$$
\end{proof}
\par Theorem \ref{thm2.2} also leads to interesting applications when combined with gradient estimates for the $p-$Laplacian equations. For instance,
Theorem 1.1 in \cite{sung2014sharp}  suggests the following proposition:
\begin{corollary}\label{coro2.3}
Let $(M,g)$ be an $n+1-$dimensional complete noncompact
manifold with ${\rm Ric}[g]\geq-ng.$ Suppose $q>1$ and there exists a positive solution to the equation
$\Delta_{q} v=-\lambda_{1,q}(M)v^{q-1}.$ Then for any $p>1.$
\begin{equation}\label{2.4}
  \lambda_{1,p}(M)\geq \left(\frac{\lambda_{1,q}(M)}{y^{q-1}p}\right)^p
\end{equation}
where $y$ is the positive root of the equation
$$
  (p-1)y^p-ny^{p-1}+\lambda_{1,q}(M)=0.
$$
Moreover, if $\lambda_{1,q}(M)=\big(\frac{n}{q}\big)^{q},$ then for any $p>1,$
\begin{equation}\label{2.5}
  \lambda_{1,p}(M)
  \begin{cases}
    \geq\big(\frac{n}{qp}\big)^{p}, & {\rm if}\ p\in(1,q) \\
    =\big(\frac{n}{p}\big)^{p}, &  {\rm if}\ p\in[q,+\infty).
  \end{cases}
\end{equation}
\end{corollary}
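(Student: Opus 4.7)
The plan is to apply Theorem \ref{thm2.2} with the function $f := -\log v$, where $v>0$ is the positive eigenfunction hypothesized in the corollary; the exponent $q$ in Theorem \ref{thm2.2} is matched to the $q$ of the $q$-Laplace equation, while $p>1$ remains a free parameter. The two hypotheses of Theorem \ref{thm2.2} that need to be verified are the pointwise bounds $\Delta_q f\geq b$ and $|\nabla f|\leq a$, and each is supplied by a single input.

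First, I would specialize the computation that produced identity (\ref{2.3}) to exponent $q$ with $e^{-f}=v$ to obtain the pointwise identity
\begin{equation*}
  \Delta_q f - (q-1)|\nabla f|^q \;=\; -\frac{\Delta_q v}{v^{q-1}} \;=\; \lambda_{1,q}(M),
\end{equation*}
so that $\Delta_q f \geq \lambda_{1,q}(M)$, and the second hypothesis of Theorem \ref{thm2.2} holds with $b=\lambda_{1,q}(M)$. The first hypothesis, a uniform bound $|\nabla f|=|\nabla v|/v \leq a$, is precisely what Theorem 1.1 of \cite{sung2014sharp} provides under ${\rm Ric}\geq -ng$: a sharp Li--Yau type pointwise gradient estimate whose optimal constant is characterized as the positive root $y$ of $(p-1)y^p - ny^{p-1} + \lambda_{1,q}(M) = 0$. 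Feeding $a=y$ and $b=\lambda_{1,q}(M)$ into Theorem \ref{thm2.2} on any bounded smooth $\Omega\subset M$ gives
\begin{equation*}
  \lambda_{1,p}(\Omega) \;\geq\; \frac{\lambda_{1,q}(M)^p}{p^p\, y^{p(q-1)}} \;=\; \Bigl(\frac{\lambda_{1,q}(M)}{p\,y^{q-1}}\Bigr)^p,
\end{equation*}
and passing to the limit along a smooth exhaustion $\Omega_k\uparrow M$, using the monotone limit definition of $\lambda_{1,p}(M)$, yields (\ref{2.4}).

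For the \emph{moreover} part, I would substitute $\lambda_{1,q}(M)=(n/q)^q$ into the defining equation for $y$, isolate the explicit root, and plug it into (\ref{2.4}) to recover the stated lower bound $(n/(qp))^p$ on the range $p\in(1,q)$. The equality $\lambda_{1,p}(M)=(n/p)^p$ for $p\in[q,+\infty)$ I would then obtain by pairing the corresponding (sharpened) lower bound on that range with the standard Cheng/McKean-type upper bound $\lambda_{1,p}(M)\leq (n/p)^p$, which is valid on any complete manifold with ${\rm Ric}\geq -ng$ as a consequence of Bishop--Gromov volume comparison.

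I expect the main obstacle to lie exactly in the $p\geq q$ regime of the moreover case: depending on the position of $\lambda_{1,q}(M)$ relative to $(n/p)^p$, the polynomial $(p-1)y^p-ny^{p-1}+\lambda_{1,q}(M)$ may admit one, two, or no positive roots, so selecting the correct branch and matching it (at $y=n/p$) with the Cheng/McKean upper bound requires a delicate case analysis. The analytic core of the proof---Theorem \ref{thm2.2} applied to $f=-\log v$ together with the Sung--Wang gradient estimate---is otherwise essentially immediate; the real bookkeeping is in the algebraic identification of the sharp $y$ and in verifying that Sung--Wang's estimate gives exactly the branch needed to close the gap with the upper bound on $[q,+\infty)$.
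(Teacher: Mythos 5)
Your derivation of (\ref{2.4}) and of the case $p\in(1,q)$ in (\ref{2.5}) is exactly the paper's argument: set $f=-\ln v$, run the computation behind (\ref{2.3}) with exponent $q$ to get $\Delta_q f=(q-1)|\nabla f|^q+\lambda_{1,q}(M)\geq\lambda_{1,q}(M)$, invoke the Sung--Wang gradient estimate $|\nabla f|\leq y$, feed $a=y$ and $b=\lambda_{1,q}(M)$ into Theorem \ref{thm2.2}, and pass to a compact exhaustion. One small point: since $v$ solves the $q$-Laplace equation, the Sung--Wang root equation is the $q$-version $(q-1)y^q-ny^{q-1}+\lambda_{1,q}(M)=0$ (the $p$'s in the corollary's displayed equation are a typo); this is consistent with the paper's claim that $y=n/q$ when $\lambda_{1,q}(M)=(n/q)^q$, which is what produces $(n/(qp))^p$.

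The genuine gap is in the case $p\in[q,+\infty)$. With the sharp inputs $a=n/q$ and $b=(n/q)^q$ (sharp because they are attained in hyperbolic space), Theorem \ref{thm2.2} yields only $\lambda_{1,p}(M)\geq(n/(qp))^p$, which for $q>1$ is strictly below the Cheng upper bound $(n/p)^p$. So no amount of care in ``selecting the correct branch'' of the root $y$ can close the gap --- the delicate case analysis you anticipate is not where the difficulty lies; the gradient-estimate route simply cannot reach $(n/p)^p$. The missing ingredient is the monotonicity lemma the paper states immediately after the corollary (an extension of the Cheeger inequality, (4.3.5) in Maz'ya): the function $p\mapsto p\,(\lambda_{1,p}(\Omega))^{1/p}$ is nondecreasing, proved by testing the Rayleigh quotient at level $q$ with $u^{p/q}$ and applying H\"older. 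From $q\,(\lambda_{1,q}(M))^{1/q}=n$ this gives $\lambda_{1,p}(M)\geq(n/p)^p$ for every $p\geq q$, which together with the Cheng-type upper bound (Takeuchi) yields the stated equality. Without this (or an equivalent) comparison between eigenvalues at different exponents, your argument proves only the weaker bound $(n/(qp))^p$ on $[q,+\infty)$.
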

\begin{proof}
Recall that $\lambda_{1,p}(M)$ is defined as the limit of $\lambda_{1,p}(\Omega_k)$ for any smoothly compact exhaustion $\{\Omega_k\}_{k=1}^\infty$ of $M.$ We find that Theorem \ref{thm1.1} is also applicable to $M,$ as long as the condition ``$f\in W^{1,p_1}(\Omega)$" is modified to
``$f\in W^{1,p_1}_{\text{loc}}(M).$" This modification is equally valid for Proposition \ref{prop2.1} and Theorem \ref{thm2.2}.  Consider setting
$f=-\ln v,$ then according to (\ref{2.3}), we derive:
$$
  \Delta_{q}f=(q-1)|\nabla f|^{q}+\lambda_{1,q}(M)
$$
implying that $\Delta_{q}f\geq \lambda_{1,q}(M).$ On the other hand, Theorem 1.1 in \cite{sung2014sharp} provides an
estimate for the gradient, denoting $|\nabla f|\leq y$ where $y$ is defined as in this corollary. Consequently, this leads to equation
(\ref{2.4}) by Theorem \ref{thm2.2}.
\par If $\lambda_{1,q}(M)=\big(\frac{n}{q}\big)^{q},$ then $y=\frac{n}{q}$ and hence (\ref{2.5}) holds for $p\in(1,q).$ When $p\geq q,$ then on the one hand, $\lambda_{1,p}(M)\leq\big(\frac{n}{p}\big)^{p}$ by the Cheng type inequality (Theorem 2 in \cite{takeuchi1998first}). On the other hand, $\lambda_{1,p}(M)\geq\big(\frac{n}{p}\big)^{p}$ due to the following lemma by setting $\Omega\rightarrow M.$
\end{proof}
\begin{lemma}[An extension of Cheeger inequality: (4.3.5) in \cite{maz2011sobolev}]
Let $\Omega$ be a bounded smooth domain in a Riemannian manifold. Then the function
$$p\mapsto p\cdot(\lambda_{1,p}(\Omega))^{\frac{1}{p}}$$
is increasing for $p>1.$
\end{lemma}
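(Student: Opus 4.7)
The plan is to compare $\lambda_{1,p}(\Omega)$ and $\lambda_{1,q}(\Omega)$ for $1<p<q$ by testing the variational principle (\ref{1.3}) for $\lambda_{1,p}$ with a suitable power of the first $q$-eigenfunction. Let $u$ be the positive first eigenfunction for $\lambda_{1,q}(\Omega)$, and set $v=u^{q/p}$. Because $q/p>1$, $v$ vanishes on $\partial\Omega$ at least as strongly as $u$, and the $C^{1,\alpha}$ regularity of the $q$-eigenfunction ensures that $v\in W^{1,p}_0(\Omega)\setminus\{0\}$ is a legitimate test function. A direct chain-rule computation yields $|\nabla v|^p=(q/p)^p\,u^{q-p}|\nabla u|^p$ and $v^p=u^q$.

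The core estimate is then Hölder's inequality applied to the product $u^{q-p}\cdot|\nabla u|^p$ with conjugate exponents $q/(q-p)$ and $q/p$, giving
\begin{equation*}
  \int_\Omega u^{q-p}|\nabla u|^p\,dv_g\ \le\ \Big(\int_\Omega u^q\,dv_g\Big)^{(q-p)/q}\Big(\int_\Omega |\nabla u|^q\,dv_g\Big)^{p/q}.
\end{equation*}
Dividing through by $\int_\Omega v^p\,dv_g=\int_\Omega u^q\,dv_g$ and extracting a $p$-th root, the expression collapses to
\begin{equation*}
  \frac{\|\nabla v\|_p}{\|v\|_p}\ \le\ \frac{q}{p}\cdot\frac{\|\nabla u\|_q}{\|u\|_q}\ =\ \frac{q}{p}\,\lambda_{1,q}(\Omega)^{1/q}.
\end{equation*}
Invoking (\ref{1.3}) for $\lambda_{1,p}$ then gives $\lambda_{1,p}(\Omega)^{1/p}\le(q/p)\lambda_{1,q}(\Omega)^{1/q}$, i.e.\ $p\,\lambda_{1,p}(\Omega)^{1/p}\le q\,\lambda_{1,q}(\Omega)^{1/q}$, which is exactly the claimed monotonicity.

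There is essentially no hard step: the whole argument is a well-chosen substitution matched to the natural Hölder pairing. The only technical points are checking that $v=u^{q/p}$ belongs to $W^{1,p}_0(\Omega)$ (guaranteed by $q/p>1$ together with $u\in C^{1,\alpha}(\overline{\Omega})$ vanishing on $\partial\Omega$) and that $\int_\Omega u^{q-p}|\nabla u|^p\,dv_g$ is finite (automatic from the same Hölder bound, since its right-hand side is $\lambda_{1,q}^{p/q}\|u\|_q^p<\infty$). No isoperimetric input, coarea decomposition, or gradient estimate on the eigenfunction is required; the monotonicity follows purely from the variational formulation and Hölder's inequality.
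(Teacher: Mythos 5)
Your proof is correct and follows essentially the same route as the paper's: test the Rayleigh quotient for the smaller exponent with the power $u^{q/p}$ of a function tied to the larger exponent, then apply H\"older's inequality with the conjugate exponents $q/(q-p)$ and $q/p$. The only cosmetic difference is that the paper runs the computation with an arbitrary $u\in C^\infty_0(\Omega)$ and passes to the infimum by density, whereas you plug in the actual first $q$-eigenfunction and invoke its $C^{1,\alpha}$ regularity to justify $u^{q/p}\in W^{1,p}_0(\Omega)$; both are valid.
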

\begin{proof}
  For any $1<q<p$ and any $u\in C^\infty_0(\Omega),$
$$
  \begin{aligned}
    \lambda_{1,q}(\Omega)&\leq \frac{\int_\Omega |\nabla(u^\frac{p}{q})|^q d\upsilon_g}{\int_\Omega (u^\frac{p}{q})^q d\upsilon_g}=\Big(\frac{p}{q}\Big)^q\frac{\int_\Omega u^{p-q}|\nabla u|^q d\upsilon_g}{\int_\Omega u^p d\upsilon_g}
    \\ &\leq \Big(\frac{p}{q}\Big)^q\frac{\Big[\int_\Omega (u^{p-q})^{\frac{p}{p-q}} d\upsilon_g\Big]^{\frac{p-q}{p}} \Big[\int_\Omega(|\nabla u|^q)^{\frac{p}{q}} d\upsilon_g\Big]^{\frac{q}{p}}}{\int_\Omega u^p d\upsilon_g}
    \\ &=\Big(\frac{p}{q}\Big)^q\left(\frac{\int_\Omega |\nabla u|^p d\upsilon_g }{\int_\Omega u^p d\upsilon_g}\right)^{\frac{q}{p}}
    \end{aligned}
$$
Since $C^\infty_0(\Omega)$ is dense in $W^{1,p}_0(\Omega),$ we have
  $$\lambda_{1,q}(\Omega)\leq\Big(\frac{p}{q}\Big)^q\cdot(\lambda_{1,q}(\Omega))^{\frac{q}{p}}$$
  and this is equivalent to
   $$q\cdot(\lambda_{1,q}(\Omega))^{\frac{1}{q}}\leq p\cdot(\lambda_{1,p}(\Omega))^{\frac{1}{p}}$$
\end{proof}

\par  Now let us introduce the third application of Theorem \ref{thm1.1}. Recall that the authors utilize Theorem 1.1 in \cite{carvalho2022fundamental} to provide a straightforward proof for the generalization of McKean's theorem, asserting that if $M$ is an $n+1-$dimensional complete simply connected Riemannian manifold such that the sectional curvature is bounded above by $-1,$ then
$\lambda_{1,p}(M)\geq\Big(\frac{n}{p}\Big)^p.$ This was previously mentioned by Poliquin \cite{poliquin2014bounds} using estimates by the Cheeger constant and the Cheeger type inequality for $p-$Laplacian by Theorem 2 in \cite{takeuchi1998first}.
\par This result was further extended to the asymptotically hyperbolic Einstein (AHE) manifold in \cite{hijazi2020cheeger},
which demonstrated that the Cheeger constant of an $(n+1)$-dimensional AHE manifold $M$ with nonnegative Yamabe type conformal infinity equals to $n.$ As a consequence, they proved that

 \begin{prop}[Theorem 9 in \cite{hijazi2020cheeger}]\label{prop2.5}
 Let $(M,g)$ be an $n+1-$dimensional asymptotically hyperbolic Einstein manifold with conformal infinity of nonnegative Yamabe type. Then for any $p>1,$
  \begin{equation}\label{2.6}\lambda_{1,p}(M)=\Big(\frac{n}{p}\Big)^p.
  \end{equation}
\end{prop}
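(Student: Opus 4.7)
The plan is to prove both inequalities $\lambda_{1,p}(M) \le (n/p)^p$ and $\lambda_{1,p}(M) \ge (n/p)^p$ simultaneously for every $p > 1$. The upper bound is immediate from the Cheng-type comparison for the $p$-Laplacian: since $(M,g)$ is Einstein with $\mathrm{Ric}[g] = -ng$, one has $\mathrm{Ric}[g] \ge -ng$, and Theorem 2 of \cite{takeuchi1998first}, already invoked in the proof of Corollary \ref{coro2.3}, yields $\lambda_{1,p}(M) \le (n/p)^p$. The new content is the matching lower bound obtained directly from Theorem \ref{thm2.2}.

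The strategy for the lower bound is to exhibit a function $r$ on $M$ satisfying the hypotheses of Theorem \ref{thm2.2} with $q = 2$, $a = 1$, $b = n$: namely $|\nabla r|_g \le 1$ almost everywhere and $\Delta_g r \ge n$ in the distributional sense. Once such $r$ is in hand, the localized version of Theorem \ref{thm2.2} (applicable on the manifold $M$ via a compact exhaustion $\Omega_k \uparrow M$, exactly as in the proof of Corollary \ref{coro2.3}) gives $\lambda_{1,p}(M) \ge n^p/(p^p \cdot 1^p) = (n/p)^p$, which closes the argument.

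To construct $r$, I would fix a Yamabe representative $\hat g$ of nonnegative scalar curvature on $\partial M$ and let $\rho$ be the adapted geodesic defining function, so that near $\partial M$ the quantity $|d\rho|_{\rho^2 g}$ equals $1$. Setting $r = -\log\rho$, the geodesic-defining-function condition gives $|\nabla r|_g = 1$ exactly in a collar neighborhood of $\partial M$, and the Einstein equation together with the asymptotic analysis of the metric (as carried out in \cite{lee1995spectrum} and \cite{hijazi2020cheeger}) forces $\Delta_g r \to n$ at $\partial M$. One then extends $r$ to the compact interior — for instance by a capping-off procedure or by replacing it with an inf-convolution smoothing — while preserving both inequalities; the nonnegativity of the Yamabe invariant of $(\partial M, [\hat g])$ is exactly what permits this globalization without spoiling $\Delta_g r \ge n$.

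The principal obstacle is precisely this global extension step: arranging $|\nabla r|_g \le 1$ and $\Delta_g r \ge n$ throughout $M$, not merely near infinity, without degrading the sharp constants. This is the same geometric content that produces the equality $h(M) = n$ in \cite{hijazi2020cheeger}, but phrased directly in terms of a test function adapted to Theorem \ref{thm2.2} rather than to the Cheeger isoperimetric quotient. Once the function $r$ is secured, the remainder of the proof collapses to a one-line invocation of Theorem \ref{thm2.2} together with the Cheng–Takeuchi upper bound.
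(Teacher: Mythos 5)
Your overall skeleton is exactly the paper's: the upper bound via the Cheng--Takeuchi comparison, and the lower bound by feeding a function with $|\nabla f|\le 1$ and $\Delta f\ge n$ into Theorem \ref{thm2.2} with $q=2$, $a=1$, $b=n$ (localized via a compact exhaustion). However, the step you yourself flag as ``the principal obstacle'' --- producing such a function \emph{globally} on $M$ --- is a genuine gap, and the fixes you sketch would not close it. Setting $r=-\log\rho$ for a geodesic defining function only controls $|\nabla r|$ and $\Delta r$ in a collar of $\partial M$; in the compact interior there is no reason for either inequality to hold, and a ``capping-off'' or inf-convolution surgery cannot in general preserve \emph{both} the sharp gradient bound $|\nabla r|\le 1$ and the sharp Laplacian bound $\Delta r\ge n$ simultaneously (any such smoothing degrades at least one of the two constants, and the whole point is that both must be sharp to get $(n/p)^p$). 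The nonnegativity of the Yamabe invariant does not enter through any soft extension argument; it enters through a Bochner-type maximum principle, which is precisely the nontrivial content you have not supplied.

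The paper avoids the problem by invoking Lee's theorem from \cite{lee1995spectrum}: on an AHE manifold with conformal infinity of nonnegative Yamabe type there is a globally defined positive eigenfunction $u$ with $\Delta u=(n+1)u$ satisfying the global gradient estimate $|\nabla u|\le u$ (this is where the Einstein equation and the Yamabe hypothesis are actually used). Setting $f=\ln u$ then gives, by the identity $\Delta f=\frac{\Delta u}{u}-|\nabla f|^2=(n+1)-|\nabla f|^2$, both $|\nabla f|\le 1$ and $\Delta f\ge n$ in one line, after which Theorem \ref{thm2.2} applies exactly as you intended. So your reduction is correct and matches the paper, but to complete the argument you must either cite Lee's global eigenfunction (or equivalently the result of \cite{hijazi2020cheeger}) or reprove its gradient estimate; the construction from the defining function alone does not suffice.
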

This is a generalization of the classical Lee's spectral estimate $(p=2)$ in \cite{lee1995spectrum}.
\begin{proof}[New proof]
Let $u$ be the eigenfunction solution to $\Delta u=(n+1)u$ which was first introduced by Lee in \cite{lee1995spectrum} and set $f=\ln u,$ then a direct calculation indicates that
$$\Delta f\geq n\ \ \text{and}\ \ |\nabla f|\leq 1.$$
 Then $\lambda_{1,p}(M)\geq(\frac{n}{p})^p$ by letting $\Omega\rightarrow M$ in Theorem \ref{thm2.2}. Hence $\lambda_{1,p}(M)=(\frac{n}{p})^p$ according to the Cheng type inequality (Theorem 2 in \cite{takeuchi1998first}).
 \end{proof}

\section{The estimate of eigenvalue for domain of bounded ``radius"}\label{sec3}
We will first prove Theorem \ref{thm1.2} in this section. Here is the main idea of the proof: we utilize the distance function $r$ to construct a new subsolution $f$ on $\Omega$
such that condition (\ref{1.4}) is satisfied for certain $p_1,p_2,C,D.$ When $p\in (1,2],$ we set $p_1=p_2=p$ and this method is essentially equivalent to the Barta' inequality, i.e. Proposition \ref{prop2.1}.  When $p\in[2,+\infty),$ we set $p_1=2$ and $ p_2=\frac{1}{p-1}+1\leq 2$ and apply Theorem \ref{thm1.1} to achieve the desired results.
 \par Let $r:\Omega\rightarrow[0,R]$ be the distance function satisfying that $|dr|\equiv 1$ and $\Delta r\geq k>0$ almost everywhere. Set
 \begin{equation}\label{3.1}
 f(\cdot)=r(\cdot)-\frac{p}{k}\ln \sin a\big(r(\cdot)+\frac{p}{k}\big)
 \end{equation}
 where $a=\frac{\pi-\varepsilon}{R+\frac{p}{k}}>0$ is a constant. Here $\varepsilon<\pi$ is a small positive number. The derivative of $f$ is given by:
\begin{equation}\label{3.2}
  \dot{f}(r)=1-\frac{p}{k}a\cot a(r+\frac{p}{k}).
\end{equation}
Then we obtain that
$$\dot{f}(r)\geq \dot{f}(0)=1-\frac{p}{k}a\cot\frac{p}{k}a>0.$$ The second derivative of $f$ is
\begin{equation}\label{3.3}
  \ddot{f}=\frac{p}{k}a^2\frac{1}{\sin^2 a(r+\frac{p}{k})}=\frac{k}{p}\Big((1-\dot{f})^2+\Big(\frac{p}{k}\Big)^2a^2\Big)
\end{equation}
Given that $\dot{f}>0$ and $|\nabla r|\equiv 1,$ we have
\begin{equation}\label{3.4}
\begin{aligned}
  \Delta_p f&=\text{div}(|\nabla f|^{p-2}\nabla f)=\text{div}(|\dot{f}\nabla r|^{p-2}\dot{f}\nabla r)
  \\ &=\text{div}(\dot{f}^{p-1}\nabla r)=\dot{f}^{p-1}\Delta r+g(\nabla\dot{f}^{p-1},\nabla r)
  \\ &=\dot{f}^{p-1}\Delta r+(p-1)\dot{f}^{p-2}\ddot{f}.
\end{aligned}
\end{equation}

\textbf{Case 1, $p\in(1,2].$}
\par We choose $p_1=p_2=p$ and $C=(p-1)\frac{k}{p},$ then
\begin{equation}\label{3.5}
\begin{aligned}
 \Delta_p f-C|\nabla f|^p&\geq k\dot{f}^{p-1}+(p-1)\dot{f}^{p-2}\ddot{f}-(p-1)\frac{k}{p}\dot{f}^p
 \\ &=\frac{k}{p}\dot{f}^{p-2}\Big[p\dot{f}+(p-1)\Big((1-\dot{f})^2+\Big(\frac{p}{k}\Big)^2a^2\Big)-(p-1)\dot{f}^2\Big]
 \\ &=\frac{k}{p}\dot{f}^{p-2}\Big[(2-p)\dot{f}+(p-1)\Big(1+\frac{p^2}{k^2}a^2\Big)\Big]
\end{aligned}
\end{equation}
By defining $m=(p-1)(1+\frac{p^2}{k^2}a^2),$ we observe that the function
$$h(x)=x^{p-2}[(2-p)x+m] \ \ \ \ x\in (0,1+\frac{p}{k})$$
 achieves its minimum at $x=\frac{m}{p-1}.$ Hence
\begin{equation}\label{3.6}
\begin{aligned}
 \Delta_p f-C|\nabla f|^p&\geq \frac{k}{p} h(\frac{m}{p-1})
 =\frac{k}{p}\Big(\frac{m}{p-1}\Big)^{p-2}\Big[(2-p)\frac{m}{p-1}+m\Big]
 \\ &=\frac{k}{p}\Big(\frac{m}{p-1}\Big)^{p-1}=\frac{k}{p}\Big(1+\frac{p^2}{k^2}a^2\Big)^{p-1}=D
\end{aligned}
\end{equation}
Thus by Theorem \ref{thm1.1}, we conclude that
\begin{equation}\label{3.7}
  \lambda_{1,p}(\Omega)\geq\Big(\frac{C}{p-1}\Big)^{p-1}\cdot D=\Big(\frac{k}{p}\Big)^{p}\Big(1+\frac{p^2}{k^2}a^2\Big)^{p-1}
\end{equation}
Let $\varepsilon\rightarrow 0,$ we complete the proof for equation (\ref{1.6}) for $p\in (1,2].$
\\
\par
\textbf{Case 2,  $p\in [2,+\infty).$}
\par We choose $p_1=2, p_2=\frac{1}{p-1}+1$ and $C=(p-1)\frac{k}{p},$ then
\begin{equation}\label{3.8}
\begin{aligned}
 \Delta f-C|\nabla f|^{p_2}&\geq k\dot{f}+\ddot{f}-(p-1)\frac{k}{p}\dot{f}^{p_2}
 \\ &=\frac{k}{p}\Big[p\dot{f}+(1-\dot{f})^2+\Big(\frac{p}{k}\Big)^2a^2-(p-1)\dot{f}^{p_2}\Big]
 \\ &=\frac{k}{p}\Big(\dot{f}^2+(p-2)\dot{f}-(p-1)\dot{f}^{\frac{1}{p-1}+1}+1+\frac{p^2}{k^2}a^2\Big)
\end{aligned}
\end{equation}
We define
 $$h(x)=x^2+(p-2)x-(p-1)x^{\frac{1}{p-1}+1}\ \ \ \ x\in(0,+\infty).$$
Then $h(x)\geq 0$ is equivalent to
$$ x+(p-2)-(p-1)x^{\frac{1}{p-1}}\geq 0$$
which is obviously since $p\geq 2.$ Back to the equation (\ref{3.8}),
\begin{equation}\label{3.9}
 \Delta f-C|\nabla f|^{p_2}\geq \frac{k}{p}\Big(1+\frac{p^2}{k^2}a^2\Big).
\end{equation}
Therefore,
 \begin{equation}\label{3.10}
  \lambda_{1,p}(\Omega)\geq\Big(\frac{C}{p-1}\Big)^{p-1}\cdot  \frac{k}{p}\Big(1+\frac{p^2}{k^2}a^2\Big)=\Big(\frac{k}{p}\Big)^{p}\Big(1+\frac{p^2}{k^2}a^2\Big)
\end{equation}
With $\varepsilon\rightarrow 0,$  we complete the proof for Theorem \ref{thm1.2}.
\\
\par Finally, we present a direct application of Theorem \ref{thm1.2}:
\begin{corollary}\label{coro3.1}
  Assume that $(M,g)$ is a complete Riemannian manifold of dimension $n+1$ whose sectional curvature satisfies that $K_M\leq-\kappa^2$ for some $\kappa>0.$ Then for any $o\in M$ and any $R>0,$
   \begin{equation}\label{3.11}
    \lambda_{1,p}(B_o(R))\geq
    \big(\frac{n\kappa}{p}\big)^p\coth^p(\kappa R)\big[1+\frac{\pi^2}{(1+\frac{n\kappa}{p}R\coth(\kappa R))^2}\big]^{\min\{p-1,1\}}.
   \end{equation}
   In special, if $R$ is large, then
  \begin{equation}\label{3.12}
    \lambda_{1,p}(B_o(R))\geq
       \big(\frac{n\kappa}{p}\big)^p+ \big(\frac{n\kappa}{p}\big)^{p-2}\min\{p-1,1\}\frac{\pi^2}{R^2}+O(R^{-3}).
    \end{equation}
\end{corollary}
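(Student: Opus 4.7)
The plan is to apply Theorem \ref{thm1.2} directly, taking $r(x) := d(o,x)$ to be the Riemannian distance function from the basepoint $o$, restricted to $\Omega = B_o(R)$. Then $|dr| = 1$ almost everywhere on $\Omega$, so the only substantive task is to verify the hypothesis $\Delta r \geq k$ a.e.\ for an appropriate $k$ and then to unwind the resulting bound (\ref{1.6}) into the form (\ref{3.11})--(\ref{3.12}).

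First I would invoke the Hessian (Laplacian) comparison theorem under the upper sectional curvature bound $K_M \leq -\kappa^2$: in the model space of constant curvature $-\kappa^2$ one has $\Delta r = n\kappa \coth(\kappa r)$, and the standard comparison argument yields
$$\operatorname{Hess}(r) \geq \kappa\coth(\kappa r)\,(g - dr\otimes dr)$$
outside $\{o\}$ and the cut locus, and hence $\Delta r \geq n\kappa\coth(\kappa r)$ in the barrier / distributional / a.e.\ sense on $B_o(R)\setminus\{o\}$. Since the cut locus together with $\{o\}$ has measure zero, this is exactly the form required by Theorem \ref{thm1.2}. Because $\coth$ is decreasing and $r \leq R$ on $\Omega$, we may take the uniform constant $k := n\kappa \coth(\kappa R)$. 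Plugging this $k$ into (\ref{1.6}) immediately gives (\ref{3.11}).

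For the asymptotic formula (\ref{3.12}) I would Taylor-expand as $R \to \infty$, using $\coth(\kappa R) = 1 + O(e^{-2\kappa R})$. This gives $\coth^p(\kappa R) = 1 + O(e^{-2\kappa R})$ and
$$\frac{\pi^2}{\bigl(1+\tfrac{n\kappa}{p} R\coth(\kappa R)\bigr)^2} = \Big(\frac{p}{n\kappa}\Big)^2\frac{\pi^2}{R^2} + O(R^{-3}).$$
Applying the linearization $(1+x)^s = 1 + sx + O(x^2)$ with $s = \min\{p-1,1\}$ and multiplying by $(n\kappa/p)^p$ collects a leading term $(n\kappa/p)^p$, a $1/R^2$ correction of size $(n\kappa/p)^{p-2}\min\{p-1,1\}\pi^2$, and errors that are $O(R^{-3})$ plus exponentially small terms absorbed into $O(R^{-3})$. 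This is exactly (\ref{3.12}).

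The main subtlety is not any computation but the regularity of $r$: one must be sure the Laplacian comparison can be applied in the a.e.\ sense tolerated by Theorem \ref{thm1.2}, despite the cut locus and the singularity at $o$. This is a classical point: one argues via smooth barrier functions on a neighborhood of each regular point, extends the inequality across the cut locus in the distributional sense, and notes that the Young-inequality proof of Theorem \ref{thm1.2} only tests against $|v|^p$ against $\Delta_p f$ through integration, so measure-zero exceptional sets are irrelevant. Once this regularity point is dispatched, both (\ref{3.11}) and (\ref{3.12}) are formal consequences of Theorem \ref{thm1.2}.
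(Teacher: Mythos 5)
Your proposal is correct and follows exactly the paper's argument: the paper likewise takes $r=\mathrm{dist}(o,\cdot)$, invokes the Hessian comparison theorem to get $\Delta r\geq n\kappa\coth(\kappa R)$ in the distributional sense, and feeds $k=n\kappa\coth(\kappa R)$ into Theorem \ref{thm1.2}. Your expansion yielding (\ref{3.12}) and your remarks on the cut locus are accurate fillings-in of details the paper leaves implicit.
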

\begin{proof}
 We consider the distance function $r={\rm dist}(o,\cdot)$ in $B_o(R),$ then
$$\Delta r\geq n\kappa\coth \kappa R>n\kappa$$
in the sense of distribution by the Hessian comparison theorem. Then Theorem \ref{thm1.2} would imply (\ref{3.11}) and (\ref{3.12}).
\end{proof}

\subsection{The estimate of eigenvalue for bounded domain with bounded Ricci and mean curvature}
In this subsection, we will demonstrate the proof of Corollary \ref{coro1.3}. This proof requires us to estimate the mean curvature of the level sets of the distance function from the boundary, employing methods standard to Riemannian geometry, similar to those described in Proposition 2 in \cite{cai1999boundaries}.
\par
Assume that $\Omega$ is a bounded domain in an $n+1-$dimensionla manifold $(M,g)$ whose Ricci curvature is bounded from below, i.e.
${\rm Ric}[g]\geq-n g.$ Suppose further that $\partial\Omega$ is smooth and the mean curvature at the boundary $H|_{\partial\Omega}\geq k\geq n.$ We define the distance function
\begin{equation}\label{3.13}
\rho(x)=\text{dist}(x,\partial\Omega):\ \ \ \overline{\Omega}\rightarrow [0,R]
\end{equation}
where $R=\sup\limits_{x\in\Omega} \rho(x)$ represents  the inscribed radius of $\Omega.$ The function $\rho$ is smooth in $\Omega$ outside the cut locus which is a set of zero measure. For a given point $q\in\partial\Omega,$ we set
$\sigma:[0,T)\rightarrow \Omega$ to be the normal geodesic satisfying $\sigma(0)=q$ and $\dot{\sigma}(0)\perp T_q\partial\Omega.$ Here $\sigma(T)$ is the focal point and $T\leq R.$ Then $\rho\circ\sigma$ is continuous in $[0,T]$ and smooth in $(0,T).$ Let
$$H(s)=-\Delta\rho|_{\sigma(s)}$$
be the mean curvature of the level set $\{\rho=s\}$ with respect
to the outer normal $-\nabla\rho$ at $\sigma(s)$ and $H(0)=H|_q\geq k.$ According to the Riccati equation:
\begin{equation}\label{3.14}
  H'(s)=|\text{Hess}\rho(\sigma(s))|^2+{\rm Ric}(\dot{\sigma}(s),\dot{\sigma}(s))
\end{equation}
Defining  $h(s)=\frac{H(s)}{n},$ then $h$ satisfies
$$h'(s)\geq h^2(s)-1,\ \ h(0)=\frac{H(0)}{n}.$$
Let $y(s)$ be the unique solution to
$$y'(s)=y^2(s)-1,\ \ y(0)=\frac{H(0)}{n}.$$
 In fact,
if $H(0)=n,$ then $y(s)=1$ for $s\in[0,T].$ If $H(0)>n,$ then
$$y(s)=\coth(-s+\text{arccoth}(y(0)))$$ for $s\in[0,T)$ and $T<\text{arccoth}(y(0)).$ Consequently, we always have that
\begin{equation}\label{3.15}
y(s)\geq y(0)\geq \frac{k}{n}.
\end{equation}
Now consider the function $(h-y)e^{-\int((h+y)},$ which satisfies:
$$[(h-y)e^{-\int((h+y)}]'=e^{-\int((h+y)}(h'-y'-(h-y)(h+y))\geq 0.$$
This ensures that $h(s)\geq y(s)$ for all $s$ which implies $-\Delta\rho\geq k$ almost everywhere in $\Omega.$
In the end, we define the function $r=R-\rho.$ Then  $r$ satisfies the conditions of Theorem \ref{thm1.2}. Thus, we conclude the proof of the theorem.

\section{The first eigenvalue on AHE manifold}\label{sec4}
Firstly, we will introduce some basic materials about asymptotically hyperbolic manifold. Suppose that $\overline{M}$ is $n+1-$dimensional manifold with smooth boundary $\partial M$ of dimension $n.$ Let $M$ be its interior. A complete noncompact metric $g$ in $M$ is called smoothly ($C^{m,\alpha}$ or $W^{k,p}$)
conformally compact if there exists a defining function $\rho$ in $\overline{M}$ such that the conformal metric $\bar{g}=\rho^2g$ can extend to a smooth ($C^{m,\alpha}$ or $W^{k,p}$) Riemannian metric on $\overline{M}.$ Here the defining function $\rho$ satisfies
\begin{equation}\label{4.1}
  \rho>0\ \ \text{in} \ \  M, \ \ \ \ \rho=0\ \ \text{on} \ \partial M,\ \ \ \ \ d\rho\neq 0 \ \ \text{on} \  \ \partial M.
\end{equation}
We call $\hat{g}=\bar{g}|_{T\partial M}$ the boundary metric associated to the compactification $\bar{g}.$ It is well known that $(M,g)$ induces a conformal structure $(\partial M,[\hat{g}])$ and we call it the conformal infinity of $(M,g).$
\par Let $(M,g)$ be a conformally compact manifold and $\bar{g}=\rho^2g$ be a $C^2$ compactification. A straightforward calculation indicates that
the curvature of $(M,g)$ is of the following from \cite{graham1999volume}:
\begin{equation}\label{4.2}
  R_{ijkl}[g]=|d\rho|^2_{\bar{g}}(g_{ik}g_{jl}-g_{il}g_{jk})+O_{ijkl}(\rho^{-3})
\end{equation}
near $\partial M.$ As a consequence, the sectional curvature $K[g]=-|d\rho|^2_{\bar{g}}+O(\rho)$ is uniformly approaching to $-|d\rho|^2_{\bar{g}}$ (see \cite{mazzeo1988hodge}). Thus if in addition $|d\rho|^2_{\bar{g}}|_{\partial M}=1,$ we say $(M,g)$ is an asymptotically hyperbolic manifold or AH manifold for short.
\par Let $(M,g)$ be a $C^2$ conformally compact manifold. If $g$ is also Einstein:
${\rm Ric}[g]=-ng.$ Then a direct calculation yields that $|d\rho|^2_{\rho^2g}|_{\partial M}=1,$ and hence we call $(M,g)$ an  asymptotically hyperbolic Einstein manifold or AHE manifold for short.
\par Suppose that $(M,g)$ is a $C^{3,\alpha}$ AH manifold and $\hat{g}\in [\hat{g}]$ is a boundary representative, then there exists a unique defining function $x$ such that
$|dx|_{x^2g}\equiv 1$ in a neighbourhood of $\partial M$ and $x^2g|_{T\partial M}=\hat{g},$ \cite{graham1991einstein}\cite{lee1995spectrum}. We say that $x$ is the geodesic defining function associated with $\hat{g}$ and $\bar{g}=x^2g$ is the ($C^{2,\alpha}$) geodesic conformal compactification. In this case, the function $x$ determines an identification of $\partial M\times [0, \delta)$ in a neighbourhood of $\partial M$ in $\overline{M}$ for some small $\delta > 0.$
\subsection{Lower bound estimate}
We denote
$M_\varepsilon=\partial M\times(0,\varepsilon)$ and $E_\varepsilon=M\setminus M_\varepsilon$ for $\varepsilon<\delta.$
\begin{lemma}\label{lemma4.1}
Let $(M,g)$ be a  $C^{3,\alpha}$ AHE manifold and  $\bar{g}=x^2g$ is the geodesic conformally compactification with boundary metric $\hat{g}=\bar{g}|_{T\partial M}.$ If the scalar curvature $S_{\hat{g}}\geq0,$  then for $\varepsilon>0$ sufficiently small, the mean curvature $H_\varepsilon$ of $\partial E_\varepsilon$ in $(E_\varepsilon,g)$ with respect to the outer normal satisfies that
$H_\varepsilon\geq n.$
\end{lemma}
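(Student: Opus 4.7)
The plan is to work in geodesic normal coordinates, rewrite the AHE metric in horospheric form, and extract $H_\varepsilon$ from the Fefferman--Graham asymptotic expansion. In the collar $\partial M\times[0,\delta)$ the compactification takes the form $\bar g = dx^{2}+h_x$ with $h_0=\hat g$, so that $g=x^{-2}(dx^{2}+h_x)$. The outer unit normal to $E_\varepsilon$ at $\partial E_\varepsilon=\{x=\varepsilon\}$ points toward $\partial M$, i.e.\ in the direction of decreasing $x$; setting $t=-\log x$ puts $g$ in the form $g=dt^{2}+\bar h_t$ with $\bar h_t=e^{2t}h_{e^{-t}}$, and the outer unit normal becomes $\partial_t$. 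Consequently
\begin{equation*}
H_\varepsilon \;=\; \tfrac12\,\partial_t\log\det\bar h_t\Big|_{t=-\log\varepsilon} \;=\; n \;-\; \tfrac{\varepsilon}{2}\,\mathrm{tr}_{h_\varepsilon}\!\bigl(\partial_x h_x\bigr)\Big|_{x=\varepsilon}.
\end{equation*}

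The Einstein condition is used next to control the trace on the right. For a $C^{3,\alpha}$ AHE manifold in geodesic gauge, the Fefferman--Graham expansion reads $h_x = \hat g + x^{2}g^{(2)}+O(x^{4})$, where $g^{(2)}$ is (up to sign) the Schouten tensor of $\hat g$, explicitly $g^{(2)} = -\tfrac{1}{n-2}\bigl(\mathrm{Ric}_{\hat g}-\tfrac{S_{\hat g}}{2(n-1)}\hat g\bigr)$. A direct trace computation yields $\mathrm{tr}_{\hat g}(g^{(2)}) = -\tfrac{S_{\hat g}}{2(n-1)}$, and plugging this back one obtains
\begin{equation*}
H_\varepsilon \;=\; n \;+\; \tfrac{\varepsilon^{2}\,S_{\hat g}}{2(n-1)} \;+\; O(\varepsilon^{4}).
\end{equation*}
When $S_{\hat g}$ is strictly positive, which we may arrange by choosing a Yamabe representative in the positive Yamabe case, the $\varepsilon^{2}$ term dominates the error and $H_\varepsilon\geq n$ for all small enough $\varepsilon$.

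The main obstacle I anticipate is the borderline scalar-flat case $S_{\hat g}\equiv 0$: the $\varepsilon^{2}$ correction vanishes, so the sign of $H_\varepsilon-n$ has to be read off from the next Fefferman--Graham coefficient. The standard remedy is to push the expansion one step further using the Einstein equation; the trace of $g^{(4)}$ is expressible in terms of $|g^{(2)}|^{2}_{\hat g}=(n-2)^{-2}|\mathrm{Ric}_{\hat g}|^{2}$ when $\hat g$ is scalar-flat, and is nonnegative. Combining this with the strictly positive case yields $H_\varepsilon\geq n$ uniformly on $\partial E_\varepsilon$ for $\varepsilon$ small, completing the proof.
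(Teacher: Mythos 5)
Your leading-order computation is consistent with the paper: the exact identity underlying the lemma is $H_\varepsilon=\Delta_g(-\ln x)|_{\{x=\varepsilon\}}=n+\frac{\varepsilon^2}{2n}\bar S|_{\{x=\varepsilon\}}$, where $\bar S$ is the scalar curvature of the compactified metric $\bar g=x^2g$, and since $\bar S|_{\partial M}=\frac{n}{n-1}S_{\hat g}$ this reproduces your $n+\frac{\varepsilon^2 S_{\hat g}}{2(n-1)}+o(\varepsilon^2)$. However, there is a genuine gap in how you close the argument. The hypothesis is only $S_{\hat g}\geq 0$, and an asymptotic expansion can never yield the exact pointwise inequality $H_\varepsilon\geq n$ at points where the leading correction degenerates: at a point $q$ with $S_{\hat g}(q)=0$ the $\varepsilon^2$ term vanishes, your proposed $\varepsilon^4$ correction involving $\mathrm{tr}\,g^{(4)}$ and $|g^{(2)}|^2$ can itself vanish (e.g.\ if $g^{(2)}(q)=0$), and the sign of the remaining error term is uncontrolled — so the regress never terminates. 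Moreover, at the stated regularity ($C^{3,\alpha}$ metric, hence only a $C^{2,\alpha}$ geodesic compactification) the coefficient $g^{(4)}$ and an $O(\varepsilon^4)$ remainder in the expansion of $h_x$ are simply not available, so the second step of your plan cannot be carried out as written. The reduction to a strictly positive Yamabe representative also does not help when the Yamabe invariant is zero.

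The paper avoids all of this with an exact, non-perturbative monotonicity argument that you should compare with. Writing $\bar H(x)=\Delta_{\bar g}x=-\frac{1}{2n}x\bar S$ for the mean curvature of the level sets of $x$ in $(\overline M,\bar g)$ and feeding the conformal transformation law ${\rm \bar Ric}=-(n-1)\frac{\bar D^2x}{x}-\frac{\Delta_{\bar g}x}{x}\bar g$ into the Riccati equation $\bar H'+|\bar D^2x|^2+{\rm \bar Ric}(dx,dx)=0$ gives the identity
\begin{equation*}
\bar S'(x)=\frac{2n}{x}\,|\bar D^2x|^2\;\geq\;0,
\end{equation*}
so $\bar S$ is nondecreasing along the normal geodesics and $\bar S\geq \bar S(0)=\frac{n}{n-1}S_{\hat g}\geq 0$ throughout the collar. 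Plugging this into the exact identity $H_\varepsilon=n+\frac{\varepsilon^2}{2n}\bar S|_{\{x=\varepsilon\}}$ gives $H_\varepsilon\geq n$ with no expansion, no higher-order coefficients, and no regularity beyond $C^2$ of $\bar g$. If you want to salvage your approach, you should replace the Fefferman--Graham step by this ODE comparison for $\bar S$ (or an equivalent Riccati comparison for the shape operator of the level sets).
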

If the scalar curvature $S_{\hat{g}}>0,$ then in \cite{wang2002new}, Wang proved that $H_\varepsilon \geq n+c\varepsilon^2$ where
$c>0$ by a direct calculation. If $S_{\hat{g}}\geq 0,$ then in the following we provide a concise proof to simplify the concepts for readers, although the property is considered trivial in the study of asymptotically hyperbolic Einstein (AHE) metrics. For a comprehensive understanding, one may refer to the Appendix in \cite{anderson2008einstein}.
\begin{proof}
We use ${\rm \bar{R}ic}$ and $\bar{S}$ to denote Ricci curvature and scalar curvature of $\bar{g}$  and $\bar{D}^2$ denote the Hessian of $\bar{g}$. Then
by the  conformal transformation law of curvatures in \cite{besse2007einstein},
\begin{equation}\label{4.3}
{\rm \bar{R}ic}=-(n-1)\frac{\bar{D}^2x}{x}-\frac{\Delta_{\bar{g}} x}{x}\bar{g},
\end{equation}
\begin{equation}\label{4.4}
\bar{S}=-2n\frac{\Delta_{\bar{g}}x}{x}
\end{equation}
Let $$\bar{H}(x)=\Delta_{\bar{g}} x=-\frac{1}{2n}x\bar{S}$$ be the mean curvature of the level set of $x$ in $(\overline{M},\bar{g}),$ then the
Riccati equation indicates that
\begin{equation}\label{4.5}
\bar{H}'(x)+|\bar{D}^2x|^2+{\rm \bar{R}ic}(dx,dx)=0.
\end{equation}
Hence
\begin{equation}\label{4.6}
\bar{S}'(x)=2n\frac{|\bar{D}^2x|^2}{x}\geq 2n\frac{(\Delta_{\bar{g}} x)^2}{nx}=\frac{x}{2n^2}\bar{S}^2\geq 0.
\end{equation}
On the other hand, from (A.8) in \cite{anderson2008einstein} or Lemma 3.2 in \cite{jin2021finite}, we have that
$$\bar{S}(0)=\bar{S}|_{\partial X}=\frac{n}{n-1}S_{\hat{g}}\geq 0.$$
 Then $\bar{S}\geq 0$ in $\partial X\times [0, \delta).$ In the end, as $-\ln x$ is the distance function of $(E_\varepsilon, g),$ we can obtain that
\begin{equation}\label{4.7}
H_\varepsilon=\Delta_g(-\ln x)|_{\{x=\varepsilon\}}=n+\frac{\bar{S}|_{\{x=\varepsilon\}}}{2n}\varepsilon^2\geq n.
\end{equation}
This concludes the proof.
\end{proof}
\begin{proof}[Proof of Theorem \ref{thm1.4}]
 Suppose that $(M,g)$ is an AHE manifold with conformal infinity $(\partial M,[\hat{g}])$ of nonnegative Yamabe type. Hence we
could choose a representative boundary metric $\hat{g}\in [\hat{g}]$ such that the scalar curvature of $\hat{g}$ satisfies that $S_{\hat{g}}\geq 0.$
Let $x$ be the geodesic defining function associated to $\hat{g},$ which, as indicated by Lemma \ref{lemma4.1}, ensures $H_\varepsilon\geq n$ for small $\varepsilon>0.$
\par For any fixed $o\in M,$  select a small $\varepsilon_1>0$ such that $o\in E_{\varepsilon_1}.$ Assume that
$d=\text{dist}_g(o,\partial E_{\varepsilon_1})\leq R_1$ where $R_1$ is the inscribed radius of $E_{\varepsilon_1}.$ Then for any $R>0,$ set
$\varepsilon=\varepsilon_1e^{-R}.$ We have that
$\text{dist}_g(o,\partial E_\varepsilon)=d+R$
and the inscribed radius of $E_\varepsilon$ is $R_1+R.$(One can see (2.16) in \cite{jin2024rigidity} for more details). Therefore, by  the domain monotonicity and Theorem \ref{thm1.2}
\begin{equation}\label{4.8}
  \lambda_{1,p}(B_o(d+R))\geq \lambda_{1,p}(E_\varepsilon)\geq
   \big(\frac{n}{p}\big)^p\Big[1+\frac{\pi^2}{(1+\frac{n}{p}(R_1+R))^2}\Big]^{\min{(p-1,1)}}.
\end{equation}
Let $R\rightarrow+\infty,$ we establish the lower bound as stated in Theorem \ref{thm1.4}.
\end{proof}
\subsection{Upper bound estimate}
To derive the asymptotic expansion of $\lambda_{1,p}(B_o(R))$ on AHE manifold (i.e. Theorem \ref{thm1.4}), we briefly discuss upper bounds for the $p$-Laplacian, even though the primary focus of this paper is on lower bounds. For further results on $\lambda_{1,p}$ in asymptotically hyperbolic (AH) manifolds, we refer the reader to \cite{perez2024first}.
\begin{lemma}\label{lemma4.2}
Assume that
$(M,g)$ is an $n+1-$ dimensional complete non-compact manifold satisfying that ${\rm Ric}[g]\geq-ng,$ then for any $p>1,o\in M,$
\begin{equation}\label{4.9}
 \lambda_{1,p}(B_o(R))\leq \left(\frac{n}{p}\right)^p+\left(\frac{n}{p}\right)^{p-2}\frac{p}{2}\cdot\frac{\pi^2}{R^2}+O(R^{-1-p})+O(R^{-4}) \ \ \ R\rightarrow+\infty
\end{equation}
\end{lemma}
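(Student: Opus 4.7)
The plan is to produce an explicit radial test function in $W^{1,p}_0(B_o(R))$ whose Rayleigh quotient yields the claimed asymptotic upper bound. Guided by the $p$-Laplace ground state $e^{-nr/p}$ on the model hyperbolic space (cf.\ Proposition \ref{prop2.5}) combined with the one-dimensional Dirichlet eigenfunction $\sin(\pi r/R)$, I would take
$u(x) := f(r(x))$ with $f(r) := e^{-nr/p}\sin(\pi r/R)$ on $[0,R]$, extended by zero, where $r(x) = {\rm dist}_g(o,x)$. Since $f(R)=0$ and $|\nabla r|=1$ almost everywhere, $u \in W_0^{1,p}(B_o(R))$ with $|\nabla u|=|f'(r)|$ a.e., and the coarea formula gives
\[
\lambda_{1,p}(B_o(R)) \leq \mathcal{R}(u) = \frac{\int_0^R |f'(r)|^p A(r)\,dr}{\int_0^R |f(r)|^p A(r)\,dr}, \qquad A(r) := \mathcal{H}^n(\partial B_o(r)).
\]
The hypothesis ${\rm Ric}\geq -ng$ and Bishop--Gromov yield $A(r) \leq \omega_n \sinh^n r$ with $A(r)/\sinh^n r$ non-increasing; writing $w(r) := e^{-nr}A(r) = 2^{-n}(1-e^{-2r})^n\psi(r)$ with $\psi$ non-increasing, one sees that $w(r)\to \bar w\geq 0$ exponentially fast in $r$.

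Setting $\theta = \pi r/R$, I would factor $|f'(r)|^p = e^{-nr}(n/p)^p\sin^p\theta\bigl(1-(p\pi/(nR))\cot\theta\bigr)^p$ and Taylor-expand in $(\pi/R)\cot\theta$. The leading term produces $(n/p)^p$ in the ratio. The quadratic-in-$1/R$ term, after replacing $w$ by its bulk asymptote $\bar w$ (which costs only exponentially small error from $(1-e^{-2r})^n$ and the slow variation of $\psi$), contributes $\binom{p}{2}(n/p)^{p-2}(\pi/R)^2\cdot \int_0^\pi\sin^{p-2}\theta\cos^2\theta\,d\theta / \int_0^\pi\sin^p\theta\,d\theta = (p/2)(n/p)^{p-2}\pi^2/R^2$, using the beta-function identity $\int_0^\pi\sin^{p-2}\cos^2\,d\theta = \tfrac{1}{p-1}\int_0^\pi\sin^p\,d\theta$ (from $\cos^2=1-\sin^2$ and the recurrence $I_p=\tfrac{p-1}{p}I_{p-2}$); this is the announced main correction. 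The linear-in-$1/R$ cross term is proportional to $\int_0^R\sin^{p-1}\theta\cos\theta\,w(r)\,dr$ and vanishes to leading order by the antisymmetry $\int_0^\pi\sin^{p-1}\cos\,d\theta=0$ once $w$ is replaced by $\bar w$; its residual from the transition layer $r\in[0,O(1)]$ (where $w(r)\sim\omega_n r^n$ differs markedly from $\bar w$) and from the exponentially decaying bulk deviation $w(r)-\bar w=O(e^{-2r})$ is controlled via $\sin^{p-1}(\pi r/R)\leq (\pi r/R)^{p-1}$ for $p>1$, giving a residual of order $(\pi/R)\cdot O(R^{-(p-1)})/D = O(R^{-1-p})$.

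The main technical obstacle is the near-origin boundary layer $r\in[0,O(1)]$ where $\cot(\pi r/R)\sim R$ renders the pointwise Taylor expansion invalid; I would handle this region by bounding $|f'(r)|\leq n/p+\pi/R$ uniformly and using $A(r)\leq Cr^n$ to estimate the direct contribution, producing another $O(R^{-1-p})$ term of the same order. The $O(R^{-4})$ error accommodates higher-order Taylor corrections: for instance the cubic $\binom{p}{3}(\pi/R)^3\cot^3\theta$ has vanishing bulk integral by $\int_0^\pi\sin^{p-3}\cos^3\,d\theta=0$ and thus only contributes through boundary/transition effects at $O(R^{-4})$, together with quartic Taylor remainders and the exponentially small corrections from $w(r)-\bar w$ in the quadratic term. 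A subtlety is the denominator lower bound: when $\bar w>0$ we have $D = \Theta(R)$ and the argument is clean, while if $\bar w=0$ (slow volume growth, e.g.\ Euclidean-like manifolds) the Rayleigh quotient of the test function is actually much smaller than $(n/p)^p$, so the stated inequality holds with ample margin.
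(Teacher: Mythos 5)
Your test function $f(r)=e^{-nr/p}\sin(\pi r/R)$ and the beta-function bookkeeping for the bulk integrals match the paper's computation, but there is a genuine gap in how you handle the geometry of $M$: you try to run the whole argument directly on $M$ using Bishop--Gromov, whereas the paper first invokes the Cheng-type comparison theorem ($\lambda_{1,p}(B_o(R))\leq\lambda_{1,p}(B^{\mathbb H}(R))$, via \cite{cheng1975eigenvalue} and \cite{takeuchi1998first}) and only then computes the Rayleigh quotient on the exact hyperbolic ball, where $A(r)=\omega_n\sinh^n r$ identically. Your substitute for this reduction does not work as stated. Bishop--Gromov gives only that $\psi(r)=A(r)/\sinh^n r$ is non-increasing; it gives no rate of convergence, so your key claim that $w(r)=e^{-nr}A(r)$ tends to its limit $\bar w$ \emph{exponentially fast}, and in particular that $w(r)-\bar w=O(e^{-2r})$, is unjustified. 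This is not cosmetic: the linear-in-$1/R$ cross term contributes roughly $R^{-2}\int_0^R|w(r)-\bar w|\,dr$ to the Rayleigh quotient, which is only $o(R^{-1})$ under mere convergence of $\psi$ and therefore can swamp the $\pi^2/R^2$ correction you are trying to isolate; the same problem infects the $O(R^{-4})$ bookkeeping in the quadratic term. Separately, even the inequality you would want --- that the quotient $\int|f'|^pA\,dr\big/\int|f|^pA\,dr$ on $M$ is bounded by the corresponding quotient on the model --- does not follow from $A\leq\omega_n\sinh^n r$ alone, since $A$ appears in both numerator and denominator; the usual Chebyshev/monotonicity trick fails here because $|f'/f|$ is not monotone in $r$ for this test function. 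Finally, the case $\bar w=0$ (subexponential volume growth) is dismissed with the assertion that the Rayleigh quotient is then ``much smaller than $(n/p)^p$,'' which is not evident for this particular test function and is left unproved.

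All of these difficulties evaporate if you insert the comparison theorem at the start, as the paper does: on $\mathbb H^{n+1}$ one has $e^{-nr}A(r)=\omega_n2^{-n}(1-e^{-2r})^n$, which really does converge exponentially, and the remaining one-dimensional integrals can be estimated exactly as you propose (the paper packages the expansion slightly differently, writing $-\tfrac np\sin\theta+\tfrac\pi R\cos\theta$ as a phase-shifted sine and comparing $F(R)$ and $G(R)$ to the same beta integral, which also sidesteps the non-smoothness of $|1-x|^p$ at $x=1$ that your term-by-term Taylor expansion must confront in the boundary layers). With that reduction in place, your bulk computation of the coefficient $(n/p)^{p-2}\tfrac p2\pi^2/R^2$ and the $O(R^{-1-p})$ boundary-layer estimate are sound.
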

\begin{proof}
Let us recall the classic eigenvalue comparison theorem of Cheng in \cite{cheng1975eigenvalue} and \cite{takeuchi1998first}: for any $R>0,$
$$\lambda_{1,p}(B_o(R))\leq\lambda_{1,p} (B^\mathbb{H}(R)).$$
Here $B^\mathbb{H}(R)$ is a geodesic ball of radius $R$ in hyperbolic space $\mathbb{H}^{n+1}.$
 Hence we only need to make estimates of the upper bound first Dirichlet eigenvalue of $p-$Laplacian of geodesic balls in hyperbolic space.
 In the followings, we use
 $$
 \Gamma(p)=\int_0^\infty t^{p-1} e^{-t}\, dt$$
  $$B(p,q)=\int_0^1 t^{p-1}(1-t)^{q-1}\, dt=2\int_0^{\frac{\pi}{2}}\sin^{2p-1}\theta\cos^{2q-1}\theta\,d\theta
 $$
 to denote the Gamma function and Beta function separately.
\par Assume that $r$ is the distance function of the centre point.
Let $R$ be a large number and we consider the function  
$$f(\cdot)=e^{-\frac{n}{p}r(\cdot)}\sin\frac{\pi}{R}r(\cdot) \ \ \text{in} \ \ B^\mathbb{H}(R).$$ Then
\begin{equation}\label{4.10}
\begin{aligned}
\lambda_{1,p}(B^\mathbb{H}(R))&\leq \frac{\displaystyle\int_{B^\mathbb{H}(R)}|\nabla f|^p\ d\upsilon_g}{\displaystyle\int_{B^\mathbb{H}(R)}|f|^p d\upsilon_g}
\\ &=\frac{\displaystyle\int_0^R e^{-nr}\Big|-\frac{n}{p}\sin\frac{\pi r}{R}+ \frac{\pi}{R}\cos\frac{\pi r}{R}\Big|^p\omega_n\sinh^nr dr}{\displaystyle\int_0^R e^{-nr}\sin^p\frac{\pi r}{R}\omega_n\sinh^nr dr}
\\ &=\frac{\displaystyle\int_0^R (1-e^{-2r})^n\Big|-\frac{n}{p}\sin\frac{\pi r}{R}+ \frac{\pi}{R}\cos\frac{\pi r}{R}\Big|^p dr}{\displaystyle\int_0^R (1-e^{-2r})^n\sin^p\frac{\pi r}{R} dr}
\\ &=\frac{\displaystyle\int_0^\pi (1-e^{-\frac{2R}{\pi}\theta})^n\Big|-\frac{n}{p}\sin\theta+ \frac{\pi}{R}\cos\theta\Big|^p d\theta}{\displaystyle\int_0^\pi (1-e^{-\frac{2R}{\pi}\theta})^n\sin^p\theta d\theta}
\\ &=\Big(\frac{n^2}{p^2}+\frac{\pi^2}{R^2}\Big)^{\frac{p}{2}}\cdot
\frac{\displaystyle\int_0^\pi (1-e^{-\frac{2R}{\pi}\theta})^n\cdot|\sin(\theta-\alpha)|^p d\theta}{\displaystyle\int_0^\pi (1-e^{-\frac{2R}{\pi}\theta})^n\sin^p\theta d\theta}
\\&=\Big(\frac{n^2}{p^2}+\frac{\pi^2}{R^2}\Big)^{\frac{p}{2}}\cdot\frac{F(R)}{G(R)}
\end{aligned}
\end{equation}
Here $\alpha$ is a constant determined by
$$\sin \alpha=\frac{\pi}{R}\Big(\frac{n^2}{p^2}+\frac{\pi^2}{R^2}\Big)^{-\frac{1}{2}},\ \ \cos\alpha=\frac{n}{p}\Big(\frac{n^2}{p^2}+\frac{\pi^2}{R^2}\Big)^{-\frac{1}{2}}.$$
On the one hand,
\begin{equation}\label{4.11}
F(R)\leq  \int_0^\pi |\sin (\theta-\alpha)|^p d\theta=\frac{1}{2}B(\frac{p+1}{2},\frac{1}{2}).
\end{equation}
On the other hand,
for $R$ big enough, we have that
\begin{equation}\label{4.12}
\begin{aligned}
\int_0^\pi e^{-R\theta}\sin^p\theta d\theta &=\frac{1}{R}\int_0^{R\pi}e^{-t}\sin^p\frac{t}{R}dt
\\ & \leq \frac{1}{R}\int_0^{R\pi}e^{-t}\left(\frac{t}{R}\right)^pdt
\\ &\leq \frac{1}{R^{1+p}}\Gamma(p+1)
\end{aligned}
\end{equation}
 As a consequence,
\begin{equation}\label{4.13}
\begin{aligned}
  G(R)&=\int_0^\pi \sin^p\theta d\theta +\sum\limits_{k=1}^nC_n^k\int_0^\pi (-e^{-\frac{2R}{\pi}\theta})^k\sin^p\theta d\theta
  \\ &\geq \frac{1}{2}B(\frac{p+1}{2},\frac{1}{2})-\frac{C(n,p)}{R^{1+p}}
  \end{aligned}
\end{equation}
where $C(n,p)$ is a constant depending on $n$ and $p.$ Then (\ref{4.11}) and (\ref{4.13}) imply that
\begin{equation}\label{4.14}
\begin{aligned}
  \lambda_{1,p}(B^\mathbb{H}(R))&\leq\Big(\frac{n^2}{p^2}+\frac{\pi^2}{R^2}\Big)^{\frac{p}{2}}\frac{F(R)}{G(R)}
  \\   &\leq \Big(\frac{n}{p}\Big)^{p}\Big(1+\frac{p}{2}\frac{p^2}{n^2}\frac{\pi^2}{R^2}+O(R^{-4})\Big)\cdot(1+O(R^{-1-p}))
  \\ &=\Big(\frac{n}{p}\Big)^p+\Big(\frac{n}{p}\Big)^{p-2}\frac{p}{2}\cdot\frac{\pi^2}{R^2}+O(R^{-1-p})+O(R^{-4})
\end{aligned}
\end{equation}
\end{proof}

\section{The geometric property of the asymptotical behavior}\label{sec5}
In order to prove Proposition \ref{prop1.5}, we first introduce a result of Sung-Wang:
\begin{theorem}\label{thm5.1}[Theorem 1.4 in \cite{sung2014sharp}]
 Let $(M,g)$ be a complete manifold of dimension $n+1\geq 3$ with ${\rm Ric}\geq -ng$ and $\lambda_{1,p}(M)=\big(\frac{n}{p}\big)^p$ for
 some $p\leq\frac{n^2}{2(n-1)}.$ Then
  \par (1) $M$ is connected at infinity; or
  \par (2) $n\geq3$ and $(M,g)=(\mathbb{R}\times N,dt^2+e^{2t}g_N)$ where $N$ is compact; or
  \par (3) $n=2$ and $(M,g)=(\mathbb{R}\times N,dt^2+\cosh^2 (t)g_N)$ where $N$ is a compact.
\end{theorem}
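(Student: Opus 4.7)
The plan is to combine the spectral identity forced by (\ref{1.9}) with the Sung--Wang trichotomy (Theorem \ref{thm5.1}), then eliminate the two warped-product alternatives of the trichotomy by a Rayleigh-quotient upper bound that contradicts (\ref{1.9}) precisely when $p \in (8/7, 8)$.

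First, from (\ref{1.9}) I will deduce $\lambda_{1,p}(M) = (n/p)^p$. The lower bound follows at once from the definition $\lambda_{1,p}(M) = \lim_R \lambda_{1,p}(B_o(R))$ applied to (\ref{1.9}), the $O(R^{-2})$ and $O(R^{-3})$ corrections vanishing in the limit; the matching upper bound is the Cheng-type eigenvalue comparison (Theorem 2 in \cite{takeuchi1998first}) with model space $\mathbb{H}^{n+1}$, using ${\rm Ric} \geq -ng$. Because $p \leq n^2/(2(n-1))$, Theorem \ref{thm5.1} applies and forces one of: (i) $M$ is connected at infinity --- the desired conclusion; (ii) $n \geq 3$ and $(M,g) \cong (\mathbb{R} \times N, dt^2 + e^{2t} g_N)$ with $N$ compact; or (iii) $n = 2$ and $(M,g) \cong (\mathbb{R} \times N, dt^2 + \cosh^2(t) g_N)$ with $N$ compact.

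Next, I will rule out (ii) and (iii) by exhibiting a test function whose Rayleigh quotient on $B_o(R)$ violates (\ref{1.9}). Place $o = (0, x_0)$ on the waist $\{t = 0\}$; $B_o(R)$ contains a tubular neighbourhood of the axial geodesic $\{(t, x_0) : |t| \leq R - C\}$ for a constant $C = C({\rm diam}(N))$. Take a separable test function $u(t, x) = f(t)\eta(x;t)$, with $\eta$ a slowly-varying transverse cutoff, and $f(t) = w(t)^{-1/p}\cos(\pi t/(2L))$, where $L = R - C$ and $w(t) = e^{nt}$ (resp.\ $\cosh^n t$) is the warped volume weight. The substitution $f = w^{-1/p} h$ absorbs the weight and reduces the axial Rayleigh ratio to
\begin{equation*}
Q[h] = \frac{\int_{-L}^{L}|h'(t) - (n/p)h(t)|^p\, dt}{\int_{-L}^L |h(t)|^p\, dt}, \qquad h(t) = \cos(\pi t/(2L)).
\end{equation*}
A direct computation exploiting the $\pi$-periodicity of $|\sin|^p$ and $|\cos|^p$ gives exactly $Q[h] = ((n/p)^2 + (\pi/(2L))^2)^{p/2}$, and expansion for large $R$ yields
\begin{equation*}
\lambda_{1,p}(B_o(R)) \leq \Big(\frac{n}{p}\Big)^p + \Big(\frac{n}{p}\Big)^{p-2}\cdot\frac{p}{8}\cdot\frac{\pi^2}{R^2} + O(R^{-3}).
\end{equation*}
Combined with (\ref{1.9}) this forces $\min(p-1, 1) \leq p/8$. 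The range $p \in (8/7, 8)$ is exactly where this fails: for $p \in (8/7, 2]$ one has $p - 1 > p/8$, and for $p \in [2, 8)$ one has $1 > p/8$. Both subcases yield a contradiction, so alternatives (ii) and (iii) are excluded, leaving (i), which is the conclusion of Proposition \ref{prop1.5}.

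The main obstacle is the analytic localization of the test function: one must verify rigorously that $u(t, x) = f(t)\eta(x;t)$ can be taken in $W^{1,p}_0(B_o(R))$ with the transverse gradient of $\eta$ contributing only an $O(R^{-3})$ remainder to the Rayleigh quotient. In case (ii) the admissible tube radius at height $t$ behaves like $(R - |t|) e^{-t}$ and shrinks exponentially at the hyperbolic end; in case (iii) the $\cosh$-weight makes the two ends symmetric. In both cases a fixed small $N$-neighbourhood of $x_0$ remains admissible throughout $|t| \leq R - C$, which suffices to absorb the transverse gradient cost into lower-order terms. Once this localization is in place, the periodicity identity $Q[h] = ((n/p)^2 + (\pi/(2L))^2)^{p/2}$ is a clean one-line calculation, and the arithmetic gap $p/8 < \min(p-1, 1) \iff p \in (8/7, 8)$ closes the argument.
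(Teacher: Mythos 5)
Your proposal does not prove the statement it is addressed to. Theorem \ref{thm5.1} \emph{is} the Sung--Wang trichotomy: it asserts that ${\rm Ric}\geq -ng$ together with the bare spectral identity $\lambda_{1,p}(M)=(n/p)^p$ (for $p\leq n^2/(2(n-1))$) already forces one of the three alternatives. Your argument's first substantive step is to ``combine \dots with the Sung--Wang trichotomy (Theorem \ref{thm5.1})'' and then to eliminate alternatives (2) and (3) under the stronger asymptotic hypothesis (\ref{1.9}); that is a proof of Proposition \ref{prop1.5}, not of Theorem \ref{thm5.1}, and as a proof of the latter it is circular, since the trichotomy is invoked as a black box. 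The actual content of Theorem \ref{thm5.1} --- the sharp gradient estimate for positive $p$-harmonic eigenfunctions, the Bochner-type rigidity analysis in the equality case, and the classification of ends via $p$-nonparabolicity --- is entirely absent from your write-up. Note also that the paper itself offers no proof of Theorem \ref{thm5.1}; it is quoted verbatim from Theorem 1.4 of \cite{sung2014sharp}, so the only meaningful comparison is the observation that you have proved a different proposition.

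For what it is worth, the elimination step you describe does essentially reproduce the paper's Section \ref{sec5} proof of Proposition \ref{prop1.5} (Example \ref{ex5.2}, its $n=2$ analogue, and the arithmetic $\min\{p-1,1\}\leq p/8$ failing exactly for $p\in(8/7,8)$), and your identity $Q[h]=((n/p)^2+(\pi/(2L))^2)^{p/2}$ agrees with (\ref{5.2}). One technical comment: the transverse cutoff $\eta(x;t)$ and the worry about exponentially shrinking tube radii are unnecessary. On $(\mathbb{R}\times N, dt^2+e^{2t}g_N)$ one tests directly on the slab $B_N(L)=(-L,L)\times N$ with the purely axial function $f(t)=e^{-nt/p}\cos(\pi t/(2L))$, which already lies in $W^{1,p}_0(B_N(L))$ with no cutoff, and one uses the two-sided inclusions $B_o(L-d_g(o,N))\subseteq B_N(L)\subseteq B_o(L+{\rm diam}(N)+d_g(o,N))$ together with domain monotonicity; this is what the paper does and it removes your ``main obstacle'' entirely. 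But none of this bears on the theorem you were asked to prove, whose proof must be supplied from \cite{sung2014sharp} or reconstructed independently.
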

In the following, we will show that the cases (2) and (3) would not happen under  the condition
of Proposition \ref{prop1.5}. We only need to calculate the $p-$eigenvalue of geodesic balls in the following two examples.
\begin{example}\label{ex5.2}
  Consider the manifold $(\mathbb{R}\times N,dt^2+e^{2t}g_N)$ where $N$ is compact of dimension $n\geq 3.$ Then for any point $o$ and any
  $p>1,$
  \begin{equation}\label{5.1}
    \lambda_{1,p}(B_o(R))\leq\Big(\frac{n}{p}\Big)^p+\Big(\frac{n}{p}\Big)^{p-2}\frac{p}{8}\frac{\pi^2}{R^2}+O(R^{-3})\ \ \ R\rightarrow\infty.
  \end{equation}
\end{example}
\begin{proof}
Let $B_N(R)=(-R,R)\times N$ for large $R>0,$ then
$$B_o(R-{\rm d}_g(o,N))\subseteq B_N(R)\subseteq B_o(R+{\rm diam}(N)+{\rm d}_g(o,N)).$$
Therefore, we only need to consider the $p-$eigenvalue of $B_N(R)$ as $R\rightarrow\infty.$
\par Set $f=e^{-\frac{nt}{p}}\cos(\frac{\pi t}{2R})\in W^{1,p}_0(B_N(R))$ for $t\in [-R,R].$ Then
\begin{equation}\label{5.2}
\begin{aligned}
   \lambda_{1,p}(B_N(R))& \leq\frac{\displaystyle\int_{B_N(R)}|\nabla f|^p\ d\upsilon_g}{\displaystyle\int_{B_N(R)}|f|^p d\upsilon_g}
=\frac{\displaystyle\int_{-R}^R|f'(t)\nabla t|^p \cdot{\rm Vol}(N)e^{nt}dt}{\displaystyle\int_{-R}^R|f(t)|^p \cdot{\rm Vol}(N)e^{nt}dt}
   \\ &=\frac{\displaystyle\int_{-R}^R \big|\frac{n}{p}\cos\frac{\pi t}{2R}+\frac{\pi}{2R}\sin\frac{\pi t}{2R}\big|^p dt}{\displaystyle\int_{-R}^R\cos^p\frac{\pi t}{2R}dt}
   \\ &=\Big(\frac{n^2}{p^2}+\frac{\pi^2}{4R^2}\Big)^{\frac{p}{2}}
   \end{aligned}
\end{equation}
Then
\begin{equation}\label{5.3}
\begin{aligned}
 \lambda_{1,p}(B_o(R))&\leq\lambda_{1,p}\bigl(B_N\big(R-{\rm diam}(N)-{\rm d}_g(o,N)\big)\bigl)
 \\&\leq \Big(\frac{n}{p}\Big)^p+\Big(\frac{n}{p}\Big)^{p-2}\frac{p}{8}\frac{\pi^2}{R^2}+O(R^{-3})\ \ \ R\rightarrow\infty
 \end{aligned}
 \end{equation}
\end{proof}

\begin{example}
  Consider the manifold $(\mathbb{R}\times N,dt^2+\cosh^2(t)g_N)$ where $N$ is compact of dimension $n=2.$ Then for any point $o$ and any
  $p>1,$
  \begin{equation}\label{5.4}
    \lambda_{1,p}(B_o(R))\leq\Big(\frac{2}{p}\Big)^p+\Big(\frac{2}{p}\Big)^{p-2}\frac{p}{8}\frac{\pi^2}{R^2}+O(R^{-3})\ \ \ R\rightarrow\infty.
  \end{equation}
\end{example}
The proof is almost the same as Example \ref{ex5.2} by choosing the test function as $f=\cosh^{-\frac{2}{p}}(t)\cos\frac{\pi t}{2R}.$
\begin{proof}[Proof of Proposition \ref{prop1.5}:]
 If $M$ is not connected at infinity, then (\ref{5.1}) must hold according to Theorem \ref{thm5.1} and the above two examples. So
$$\min\{p-1,1\}\leq \frac{p}{8}$$
and it contradicts the scope of $p.$
\end{proof}

\bibliographystyle{plain}%

\noindent{Xiaoshang Jin}\\
  School of mathematics and statistics, Huazhong University of science and technology, Wuhan, P.R. China. 430074
 \\Email address: jinxs@hust.edu.cn

\end{document}